\def\BBox{\kern  -0.2cm\hbox{\vrule width 0.2cm height 0.2cm}}
\newtheorem{theorem}{Theorem}[section]
\newtheorem{lemma}[theorem]{Lemma}
\newtheorem{definition}[theorem]{Definition}
\newtheorem{corollary}[theorem]{Corollary}
\newtheorem{proposition}[theorem]{Proposition}
\newtheorem{remark}[theorem]{Remark}
\newtheorem{conjecture}[theorem]{Conjecture}
\setlist{noitemsep}
\newcommand{\N}{\mathbb{N}}
\newcommand{\ca}{\mathcal}
\title{\textbf{An algorithm and new bounds for the circular flow number of snarks}}
\date{}
\author{ Jan Goedgebeur$^{1,2}$\thanks{Supported by a Postdoctoral Fellowship of the Research Foundation Flanders (FWO)}, Davide Mattiolo$^3$, Giuseppe Mazzuoccolo$^4$\\ 
\footnotesize $^1$ Deparment of Applied Mathematics, Computer Science and Statistics,\\[-3mm]
\footnotesize Ghent University, Krijgslaan 281 - S9, 9000 Ghent, Belgium.\\
\footnotesize $^2$ Computer Science Department,\\[-3mm]
\footnotesize University of Mons, Place du Parc 20, 7000 Mons, Belgium.\\
\footnotesize $^3$ Dipartimento di Scienze Fisiche, Informatiche e Matematiche,\\[-3mm]
\footnotesize Universit\`{a} di Modena e Reggio Emilia, Via Campi 213/b, 41126 Modena, Italy.\\
\footnotesize $^4$ Dipartimento di Informatica,\\[-3mm]
\footnotesize Universit\`{a} degli Studi di Verona, Strada le Grazie 15, 37134 Verona, Italy.\\
}
\begin{document}
\maketitle

\vspace{-1em}

\begin{abstract}
It is well-known that the circular flow number of a bridgeless cubic graph can be computed in terms of certain partitions of its vertex-set with prescribed properties. 
In the present paper, we first study some of these properties that turn out to be useful in order to design a more efficient algorithm for the computation of the circular flow number of a bridgeless cubic graph. Using this algorithm, we determine the circular flow number of all snarks up to 36 vertices as well as the circular flow number of various famous snarks.
After that, as combination of the use of our algorithm with new theoretical results, we present an infinite family of snarks of order $8k+2$ whose circular flow numbers meet a general lower bound presented by Lukot'ka and \v Skoviera in 2008. In particular this answers a question proposed in their paper.
Moreover, we improve the best known upper bound for the circular flow number of Goldberg snarks and we conjecture that this new upper bound is optimal.
Finally, we discuss a possible extension to the computation of the circular flow number in the non-regular case.
\end{abstract}

{\bf Keywords:} cubic graph, circular flow number, snark, bisection, algorithm


\section{Introduction}\label{intro}

Given a real number $r\ge2$, a circular nowhere-zero $r$-flow in a graph $G=(V,E)$ is an orientation of $G$ together with a flow function $f\colon E\to [1,r-1]$ such that, at every vertex, the sum of all incoming flow values equals the sum of all outgoing ones. The circular flow number of a graph $G$, denoted by $\Phi_c(G)$, is the least real number $r$ such that $G$ admits a circular nowhere-zero $r$-flow. This parameter was explicitely introduced in~\cite{GTZ} and shown to be a minimum and a rational number for every bridgeless graph. Tutte's $5$-flow Conjecture~\cite{Tutte54} is one of the most important and outstanding conjectures in the theory of flows in graphs. The conjecture claims that every bridgeless graph admits a (circular) nowhere-zero $5$-flow and it is well-known that it is equivalent to its restriction to cubic graphs. When dealing with such graphs, the circular flow number is strictly related to the chromatic index, namely a cubic graph $G$ is $3$-edge-colorable if and only if $\Phi_c(G)\le4$. (The chromatic index of a graph is the minimum number of colors required for a proper edge-coloring of the graph.)
Therefore a counterexample to Tutte's conjecture, if any, must be found inside the class of non-$3$-edge-colorable cubic graphs. In particular, it is known that such a counterexample must be found inside the class of snarks, i.e.\ cyclically $4$-edge-connected cubic graphs with girth at least $5$ which do not admit a $3$-edge-coloring. In contrast to $3$-edge-colorable cubic graphs, whose circular flow number can be either $3$ or $4$, it was proved in~\cite{LuSk} that for every rational number $r \in (4,5]$ there is a snark $G$ such that $\Phi_c(G)=r$. 
Due to Tutte's $5$-flow Conjecture, snarks with circular flow number exactly $5$ have a certain interest and they were studied in~\cite{EMT16, GMM, MaRa06}.

One of the aims of this paper is to design an algorithm that computes the circular flow number of a cubic graph: our algorithm works for all bridgeless cubic graphs having circular flow number strictly less than $5$ without additional assumptions; if a bridgeless cubic graph has circular flow number at least $5$, the algorithm only says that it has $\Phi_c(G) \geq 5$. Clearly, if Tutte's $5$-flow Conjecture holds true, then it can be applied to all bridgeless cubic graphs.

It works using a well-known relation between nowhere-zero flows and bisections in cubic graphs, see for instance~\cite{EMT_bisections}. Note also that an equivalent formulation in terms of balanced valuations is given for instance in~\cite{Jaeg}. 

Section~\ref{sec:properties} is devoted to the description of properties of bisections that turn out to be useful for the design of our algorithm to determine the circular flow number of a (bridgeless) cubic graph, which is presented in Section~\ref{sec:algorithm}. Using our implementation of this algorithm, we determined the circular flow number of all snarks up to 36 vertices as well as the circular flow number of various famous snarks. The results of these computations can also be found in Section~\ref{sec:algorithm}.

Two of the main results are given in Section~\ref{sec:new_bounds}. In this section we first improve the previous known upper bound from~\cite{Luk} for the circular flow number of the Goldberg snarks. Moreover, in \cite{Luk_cycle}, Lukot'ka and \v Skoviera proved a general lower bound for the circular flow number of a snark in terms of its order and, at the end of their paper, they suggest that there might exist an infinite family of snarks of order $8k+2$ whose circular flow numbers meet the general lower bound presented. Here, we confirm this by constructing such an infinite family.
Section~\ref{Sec:certificatecircularflownumber5} is devoted to a more theoretical result which shows that the problem of deciding if $\Phi_c(G)\ge5$, for a non-cubic graph $G$, can be reduced to deciding if $\Phi_c(H)\ge5$, for every $H\in \ca H$, where $\ca H$ is a finite class of cubic graphs. 
The paper ends with Section~\ref{sec:open_problems} where we present two new conjectures about the circular flow number of snarks.

\section{Useful Properties of Good Bisections}
\label{sec:properties}

A $2$-bisection of a graph $G$ is a partition of its vertex set $V(G)= \ca B \cup \ca W$ such that $|\ca B|=|\ca W|$ and each connected component of both induced subgraphs $G[\ca B]$ and $G[\ca W]$ has at most $2$ vertices. We will refer to the vertices of $\ca B$, resp.\ $\ca W$, as \textbf{black}, resp.\ \textbf{white}, vertices. We can define the set $\partial(X) = \{ uv\in E(G)\colon u\in X\text{ and } v\notin X \}$ and $\Delta(X)=|b_X-w_X|$, where $b_X=|\ca B \cap X|$ and $w_X=|\ca W \cap X|$.

A $2$-bisection is said \textbf{orientable} if and only if \[\frac{|\partial(X)|}{\Delta(X)} \ge 1\] for every $X\subseteq V(G)$.

It is easy to check that for any $r<5$, any circular nowhere-zero $r$-flow of a cubic graph $G$ induces a $2$-bisection of $G$: color a vertex white or black according to the number of inner edges ($1$ or $2$, respectively) in the orientation of $G$ corresponding to the flow with a positive value for every edge.  
It is well-known, see for instance~\cite{EMT_bisections} and~\cite{Jaeg}, that, if the circular flow number of $G$ is less than $5$, in order to compute the circular flow number of $G$ we can search for subsets $X$ of $V(G)$ that minimize the ratio \begin{equation}\label{ratio}
\frac{|\partial(X)|}{\Delta(X)}
\end{equation}
when a $2$-bisection is fixed and then search for the maximum among these values.

The well-known relation between the ratio~(\ref{ratio}) and the circular flow number of $G$, if $\Phi_c(G)<5$, is the following.

\begin{equation}\label{ratiovscfn}
\max_{\text{$2$-bisection of $G$}} \left( \min_{X \subset V(G)} \frac{|\partial(X)|}{\Delta(X)} \right)= \frac{\Phi_c(G)}{\Phi_c(G)-2}
\end{equation}

We would like to stress that if $G$ has circular flow number at least $5$, it is not true in general that its flow naturally induces a $2$-bisection. For instance, the Petersen graph does not admit a $2$-bisection at all, and there exist other bridgeless cubic graphs, admitting a $2$-bisection,  with the property that every $5$-flow does not induce a $2$-bisection (see for instance~\cite{AGLM, EMT_bisections, Tarsi} for a more general discussion about bisections in cubic graphs).

Anyway, for any fixed $2$-bisection of a graph, if it does exist, we call the subsets that minimize the ratio~(\ref{ratio}) \textbf{good}. Moreover, we call the $2$-bisections that maximize the left term in~(\ref{ratiovscfn}) \textbf{optimal}.

If $\Delta(X)=0$, we define its ratio to be $\infty$, hence we will look for subsets of $V(G)$ such that $\Delta(X)>0.$ In particular, if $X$ is a proper subset of $V(G)$ it follows that $\frac{|\partial(X)|}{\Delta(X)}= \frac{|\partial(\bar{X})|}{\Delta(\bar{X})}$, where $\bar{X}$ denotes $V(G)-X$, and so, for a given $2$-bisection, we can always find at least a good subset of order at most $\frac{|V(G)|}{2}$. 
From now on, we will also assume without loss of generality to have more black vertices than white ones in $X$, i.e.\ $b_X> w_X.$

\begin{lemma}\label{lem:conn}
	Consider a graph $G$ having a $2$-bisection $V(G)=\ca B\cup \ca W$ and a subset $X\subseteq V(G)$. Suppose that there is $X\subseteq V$ such that $G[X]$ is disconnected with components $A_1,\dots, A_n$. Then there is one of those components $A$ such that \[ \frac{|\partial(A)|}{\Delta(A)} \le \frac{|\partial(X)|}{\Delta(X)}. \]
\end{lemma}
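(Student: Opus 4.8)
The plan is to prove this by a straightforward averaging argument. The key observation is that both $|\partial(X)|$ and $\Delta(X)$ decompose as sums over the connected components $A_1,\dots,A_n$ of $G[X]$. Since the $A_i$ partition $X$ and, crucially, there are \emph{no} edges of $G$ running between distinct components $A_i$ and $A_j$ (this is exactly what it means for them to be distinct connected components of the induced subgraph $G[X]$), every edge leaving $X$ leaves exactly one of the $A_i$ to a vertex outside $X$. Hence $|\partial(X)| = \sum_{i=1}^n |\partial(A_i)|$. Similarly, writing $b_{A_i}$ and $w_{A_i}$ for the numbers of black and white vertices in $A_i$, we have $b_X = \sum_i b_{A_i}$ and $w_X = \sum_i w_{A_i}$, so by the triangle inequality
\[
\Delta(X) = |b_X - w_X| \le \sum_{i=1}^n |b_{A_i} - w_{A_i}| = \sum_{i=1}^n \Delta(A_i).
\]

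First I would record these two facts cleanly. Then the argument reduces to an elementary mediant inequality: if nonnegative reals satisfy $\sum_i p_i = P$ and $\sum_i q_i = Q$ with each $q_i \ge 0$ and $Q>0$, then $\min_i \frac{p_i}{q_i} \le \frac{P}{Q}$ (restricting the minimum to those $i$ with $q_i>0$). Applying this with $p_i = |\partial(A_i)|$ and $q_i = \Delta(A_i)$, and with $P = |\partial(X)| \ge \sum_i p_i$ reversed in direction, requires a little care because the inequality on $\Delta$ runs the ``wrong'' way: we have $\Delta(X) \le \sum_i \Delta(A_i)$ rather than equality. I would handle this by noting that a smaller denominator only helps the bound we want. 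Concretely, since $\Delta(X) \le \sum_i \Delta(A_i)$ and $|\partial(X)| = \sum_i |\partial(A_i)|$, we get
\[
\frac{|\partial(X)|}{\Delta(X)} \ge \frac{\sum_i |\partial(A_i)|}{\sum_i \Delta(A_i)} \ge \min_{i\colon \Delta(A_i)>0} \frac{|\partial(A_i)|}{\Delta(A_i)},
\]
and the component $A$ achieving this minimum is the desired one.

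The one subtlety I expect to be the main obstacle, and which deserves explicit attention, is the case where some component $A_i$ has $\Delta(A_i)=0$. By the paper's own convention such a component has ratio $\infty$, so it cannot be the minimizer and can be discarded; but I must make sure that not \emph{all} components have $\Delta(A_i)=0$, i.e.\ that the minimum over $\{i\colon \Delta(A_i)>0\}$ is taken over a nonempty set. This is guaranteed precisely because we are interested in subsets $X$ with $\Delta(X)>0$ (as stated in the setup preceding the lemma): if every $\Delta(A_i)$ were $0$ then $\sum_i \Delta(A_i)=0$ would force $\Delta(X)=0$ by the displayed inequality, contradicting $\Delta(X)>0$. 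Thus at least one component has positive $\Delta$, the mediant inequality applies over a nonempty index set, and we select $A$ to be a component attaining the minimum ratio. I would also remark that the strictness of the first inequality (from the triangle inequality on $\Delta$) is immaterial; all we need is the single inequality $\frac{|\partial(A)|}{\Delta(A)} \le \frac{|\partial(X)|}{\Delta(X)}$, which follows immediately.
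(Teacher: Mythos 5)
Your proof is correct and follows essentially the same route as the paper: decompose $|\partial(X)|$ and $\Delta(X)$ over the components, observe $|\partial(X)|=\sum_i|\partial(A_i)|$ while $\Delta(X)\le\sum_i\Delta(A_i)$, and conclude via the mediant inequality that the component of minimum ratio works. Your explicit treatment of components with $\Delta(A_i)=0$ is a minor point of extra care that the paper handles only implicitly via its cross-multiplied form of the inequality.
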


\begin{proof}
	There is $A\in \{A_1,\dots,A_n\}$ such that $\frac{|\partial(A)|}{\Delta(A)}\le \frac{|\partial(A_i)|}{\Delta(A_i)}$ for each $i$. Therefore from $|\partial(A)|\Delta(A_i)\le|\partial(A_i)|\Delta(A)$ and summing up all such inequalities we get \[ \frac{|\partial(A)|}{\Delta(A)}\le \frac{\sum_{i=1}^n|\partial(A_i)|}{\sum_{i=1}^n\Delta(A_i)} \le \frac{|\partial(X)|}{\Delta(X)}. \]
\end{proof}

Applying the previous lemma we can conclude that if $X\subseteq V$ is a good subset such that $G[X]$ is not connected, then all its connected components are good as well.

Consider a graph $G$ with a $2$-bisection. For a subset $X\subseteq V(G)$ let us denote by $\partial_V(X) = \{ v \in X \colon \deg_{G[X]}(v)<3\} =\{ v\in X\colon \exists\, w\in V(G)-X \text{ such that } vw \in \partial(X) \}$.

\begin{lemma}\label{lem:monochrom}
	Consider a bridgeless cubic graph $G$ having a $2$-bisection. Consider a $2$-bisection $V(G)=\ca B\cup \ca W$ and one of its good subsets $X\subseteq V(G)$, with $b_X> w_X$ and $\frac{|\partial(X)|}{\Delta(X)}>1$.
	Then, $\partial_V(X)$ is a subset of black vertices.
\end{lemma}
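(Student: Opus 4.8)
The plan is to argue by contradiction through a local exchange move: if some white vertex lies on the boundary of $X$, then deleting it from $X$ strictly decreases the ratio $|\partial(X)|/\Delta(X)$, contradicting the assumption that $X$ is good for the fixed $2$-bisection.

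Concretely, suppose there is a white vertex $w\in\partial_V(X)$, and set $X'=X\setminus\{w\}$. First I would record how the two quantities defining the ratio change. Writing $d=\deg_{G[X]}(w)$, the fact that $w\in\partial_V(X)$ gives $d\le 2$, so $w$ sends $3-d\ge 1$ edges out of $X$. Deleting $w$ turns these $3-d$ outgoing edges into interior edges of $V(G)\setminus X'$ and turns the $d$ edges from $w$ into $X'$ into new boundary edges, so that $|\partial(X')|=|\partial(X)|+2d-3$. Since $w$ is white and $b_X>w_X\ge 1$, removing it leaves $b_{X'}=b_X\ge 2$ and $w_{X'}=w_X-1$, hence $\Delta(X')=\Delta(X)+1>0$; in particular $X'$ is a legitimate competitor (nonempty, proper, and of finite ratio).

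The heart of the argument is then the inequality $|\partial(X')|/\Delta(X')<|\partial(X)|/\Delta(X)$. Writing $p=|\partial(X)|$ and $\delta=\Delta(X)$ and clearing the positive denominators, this reduces to $\delta(2d-3)<p$. Because $d\le 2$ we have $2d-3\le 1$, and the hypothesis $|\partial(X)|/\Delta(X)>1$ is exactly $p>\delta$; combining these gives $\delta(2d-3)\le\delta<p$, as required. This contradicts the minimality of the ratio at the good set $X$, so no white vertex can lie in $\partial_V(X)$, i.e.\ $\partial_V(X)\subseteq\ca B$.

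I expect the only delicate point to be the tightness of the case $d=2$: there the computation yields $\delta(2d-3)=\delta$, so the \emph{strict} inequality $p>\delta$ coming from the hypothesis $|\partial(X)|/\Delta(X)>1$ is indispensable --- with ratio exactly $1$ the move would merely preserve the ratio and the argument would collapse. Everything else (the edge bookkeeping for $|\partial(X')|$ and the sign check in the cases $d\in\{0,1\}$) is routine.
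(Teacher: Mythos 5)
Your proof is correct and follows essentially the same route as the paper: remove a white boundary vertex and show the ratio strictly drops, using $|\partial(X)|/\Delta(X)>1$ exactly where the case $d=2$ would otherwise only preserve the ratio. Your uniform bookkeeping $|\partial(X')|=|\partial(X)|+2d-3$ is a clean packaging of the paper's two explicit cases (and also covers the case $d=0$, which the paper does not write out).
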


\begin{proof}
	We want to show that there are no white vertices in $\partial_ V(X)$. Suppose by contradiction that there is a white vertex $v\in\partial_V(X)$.
	
	If $v$ is incident to a unique edge of $\partial(X)$. Then, if we set $Y:= X-v$, \[ \frac{|\partial(Y)|}{\Delta(Y)} = \frac{|\partial(X)|+1}{\Delta(X)+1} < \frac{|\partial(X)|}{\Delta(X)} \] which is a contradiction since $X$ is good. 
	
	If, on the other hand, $v$ is incident to two edges of $\partial(X)$ then, if we set $Y:= X-v$, \[ \frac{|\partial(Y)|}{\Delta(Y)} = \frac{|\partial(X)|-1}{\Delta(X)+1} < \frac{|\partial(X)|+1}{\Delta(X)+1} < \frac{|\partial(X)|}{\Delta(X)} \] and again we have a contradiction.
	
\end{proof}

\begin{remark}\label{remark:monochrom}
	If $X\subseteq V(G)$ is a good subset of vertices in a $2$-bisection, then also $\bar{X}$ is good. In particular, if the $2$-bisection is optimal then both $\partial_V(X)$ and $\partial_V(\bar{X})$ are monochromatic (in particular if one is white the other is black).
\end{remark}

\begin{corollary}\label{cor:monochrom}
	Consider a bridgeless cubic graph $G$ with $\Phi_c(G)<5$. Consider an optimal $2$-bisection $V(G)=\ca B\cup \ca W$, and let $X\subseteq V(G)$ be a good subset. Then there is no couple of adjacent vertices $v,w$ with the same color such that \[ v\in X \text{ and } w\in \bar{X}. \]
\end{corollary}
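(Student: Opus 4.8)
The plan is to argue by contradiction, deriving the statement directly from Lemma~\ref{lem:monochrom} and Remark~\ref{remark:monochrom}, since it is essentially a cut-theoretic reformulation of the fact that the boundary vertex sets of $X$ and $\bar X$ carry opposite colors. Suppose there were adjacent vertices $v,w$ of the same color with $v\in X$ and $w\in\bar X$. Then the edge $vw$ lies in $\partial(X)$, so by the very definition of $\partial_V$ we have both $v\in\partial_V(X)$ and $w\in\partial_V(\bar X)$.

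Next I would check that the hypotheses of Lemma~\ref{lem:monochrom} are automatically satisfied in this setting. Keeping the standing convention $b_X>w_X$, a good subset $X$ of an \emph{optimal} $2$-bisection realises $\frac{|\partial(X)|}{\Delta(X)}=\frac{\Phi_c(G)}{\Phi_c(G)-2}>1$ by~(\ref{ratiovscfn}), so Lemma~\ref{lem:monochrom} applies and yields that $\partial_V(X)$ consists only of black vertices; in particular $v$ is black. For the complement, the identity $|\ca B|=|\ca W|$ forces $b_X-w_X=w_{\bar X}-b_{\bar X}$, hence $w_{\bar X}>b_{\bar X}$, and running the same argument on $\bar X$ with the two colors interchanged shows that $\partial_V(\bar X)$ is entirely white; this is precisely what Remark~\ref{remark:monochrom} records. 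Therefore $w$ is white.

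Combining the two conclusions, $v$ is black while $w$ is white, contradicting the assumption that $v$ and $w$ share a colour, which completes the argument. I expect no genuine obstacle here: the only point needing care is the verification that a good subset has ratio strictly above $1$, so that Lemma~\ref{lem:monochrom} is applicable — this is exactly where the hypothesis $\Phi_c(G)<5$ is used, both to guarantee the validity of the $2$-bisection framework and the identity~(\ref{ratiovscfn}) — together with the bookkeeping that the WLOG inequality $b_X>w_X$ reverses upon passing to $\bar X$, so that the colour of $\partial_V(\bar X)$ is forced to be the opposite of that of $\partial_V(X)$.
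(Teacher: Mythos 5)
Your argument is correct and is exactly the derivation the paper intends: the corollary is stated as an immediate consequence of Lemma~\ref{lem:monochrom} and Remark~\ref{remark:monochrom}, and you have simply spelled out why an edge $vw\in\partial(X)$ puts $v\in\partial_V(X)$ and $w\in\partial_V(\bar X)$, which are monochromatic of opposite colors. Your added care in checking that the ratio exceeds $1$ (so the lemma applies) and that the inequality $b_X>w_X$ reverses on $\bar X$ is exactly the right bookkeeping.
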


\begin{remark}\label{summary_properties}
	We have proved that, for a given optimal $2$-bisection of a bridgeless cubic graph $G$ with circular flow number less than $5$ and among all \textbf{good} subsets of vertices \begin{itemize}
		\item there is at least one of them $X$ that induces a connected subgraph; 
		\item we can search it among all subsets with cardinality at most $\frac{|V(G)|}{2}$, since the ratio of a subset equals the ratio of its complement;
		\item the boundaries $\partial_V(X)$ and $\partial_V(\bar{X})$ are monochromatic of different colors.
	\end{itemize}
\end{remark}
	
The main idea of the algorithm presented in the following section is to only process sets $X$ which satisfy the three properties in Remark~\ref{summary_properties}. In order to assure that this produces consistent results, we need to stress that in every $2$-bisection (not necessarily optimal) there exists a set $X$ (not necessarily good) which satisfies all three properties and such that the ratio $\frac{|\partial(X)|}{\Delta(X)}$ is less than or equal to the ratio for a good set in an optimal $2$-bisection.
The critical property is the one on monochromatic boundaries, since it follows by Lemma~\ref{lem:monochrom} where we need to assume $\frac{|\partial(X)|}{\Delta(X)}>1$. Indeed, in principle, it could be that in a non-orientable $2$-bisection all good subsets do not satisfy the third property in Remark~\ref{summary_properties} and it could be that, at the same time, all subsets satisfying such properties have a ratio larger than the minimum one in an optimal $2$-bisection. The following lemma excludes this possibility.

\begin{lemma}
	Consider a bridgeless cubic graph $G$ having a $2$-bisection. Consider a $2$-bisection $V(G)=\ca B\cup \ca W$ and a good subset $X\subseteq V(G)$, with $b_X> w_X$ and $\frac{|\partial(X)|}{\Delta(X)}\leq 1$.
	Then, there exists a subset $X'$ of $X$ such that $\partial_V(X')$ is a subset of black vertices and $\frac{|\partial(X')|}{\Delta(X')}\leq 1$.
\end{lemma}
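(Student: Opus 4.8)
The plan is to start from $X$ and repeatedly delete white vertices that lie on the boundary, checking that the bound on the ratio survives each deletion, until no white boundary vertex is left. Concretely, suppose $\partial_V(X)$ contains a white vertex $v$, and let $k\in\{1,2,3\}$ be the number of edges of $\partial(X)$ incident with $v$; since $G$ is cubic, $v$ then has exactly $3-k$ neighbours inside $X$. Put $Y:=X-v$. The $k$ boundary edges at $v$ drop out of the boundary while the $3-k$ formerly internal edges at $v$ enter it, so $|\partial(Y)|=|\partial(X)|+(3-2k)$; and since we delete a white vertex, $\Delta(Y)=\Delta(X)+1$ and the inequality $b_Y>w_Y$ is preserved (so $\Delta(Y)>0$ and the ratio stays well defined).

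The key computation I would isolate is the following. Write $a=|\partial(X)|$ and $b=\Delta(X)$, so that the hypothesis $\frac{a}{b}\le 1$ reads $a\le b$. The new ratio is $\frac{a+3-2k}{b+1}$, and because $k\ge 1$ we have $a+3-2k\le a+1\le b+1$. Hence $\frac{|\partial(Y)|}{\Delta(Y)}\le 1$ as well: a single deletion of a white boundary vertex preserves both the ratio bound and $b_Y>w_Y$. This is exactly where the hypothesis $\frac{|\partial(X)|}{\Delta(X)}\le 1$ is used, in contrast to Lemma~\ref{lem:monochrom}, where the opposite inequality drives the argument.

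I would then iterate the step. Every deletion removes precisely one white vertex, so the number of white vertices strictly decreases and the process must stop; call $X'$ the set reached when $\partial_V$ contains no white vertex. By construction $\partial_V(X')$ is a set of black vertices and $\frac{|\partial(X')|}{\Delta(X')}\le 1$, which is the conclusion. One should also record that $b_{X'}=b_X\ge 1$ and $\Delta(X')\ge\Delta(X)\ge 1$ throughout, so $X'$ is never empty and the ratio is always finite.

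The one point that needs care --- and the reason the statement is proved as a process rather than by the single-step removal used in Lemma~\ref{lem:monochrom} --- is that deleting a white boundary vertex $v$ may push some of its former internal neighbours onto the boundary, and these neighbours may again be white; so one deletion need not already make $\partial_V$ monochromatic. The strict decrease of the white-vertex count is precisely what guarantees termination, while the elementary inequality $a+3-2k\le b+1$ guarantees that the ratio never climbs above $1$ along the way. This termination-versus-invariant interplay is the main (and only mild) obstacle; the remainder is the routine bookkeeping of how $|\partial|$ and $\Delta$ respond to a single vertex deletion.
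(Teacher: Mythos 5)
Your proposal is correct and follows essentially the same route as the paper: iteratively delete white vertices of $\partial_V(X)$, observing that each deletion changes the ratio from $\tfrac{a}{b}$ to $\tfrac{a+3-2k}{b+1}\le\tfrac{a+1}{b+1}\le 1$, and terminate because the white count strictly decreases. If anything, your version is slightly more complete, since the uniform parameter $k\in\{1,2,3\}$ also covers a white vertex isolated in $G[X]$, a case the paper's two-case analysis leaves implicit.
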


\begin{proof}
 If $\partial_V(X)$ is a subset of black vertices, then trivially we can take $X'=X$. 
 	Assume there is a white vertex $v$ in $\partial_ V(X)$. 
	
	If $v$ is incident to a unique edge of $\partial(X)$. Then, since $\frac{|\partial(X)|}{\Delta(X)}\leq1$: 
	\[ \frac{|\partial(X-v)|}{\Delta(X-v)} = \frac{|\partial(X)|+1}{\Delta(X)+1} \leq 1. \]
	
	If, on the other hand, $v$ is incident to two edges of $\partial(X)$ then, \[ \frac{|\partial(X-v)|}{\Delta(X-v)} = \frac{|\partial(X)|-1}{\Delta(X)+1} < \frac{|\partial(X)|+1}{\Delta(X)+1} \leq 1 . \] 

By repeating removal of vertices in this way, we obtain a subset $X'$ of $X$ which satisfies the required properties.	
\end{proof}

\section{Algorithm and Computational Results}\label{sec:algorithm}

The pseudocode of our algorithm to compute the circular flow number of a bridgeless cubic graph is shown in Algorithm~\ref{algo:cfn}. The notation and definitions of $\partial(X)$ and $\Delta(X)$ are as in Section~\ref{sec:properties}. Furthermore, we also use several properties of \textit{good} subsets from the previous section to speed up the algorithm (cf.\ Remark~\ref{summary_properties}). It is also possible to give an optional input parameter $r$ to the algorithm in case you only want to know if $\Phi_c(G) \geq r$ or not. This is usually faster than computing the exact value of $\Phi_c(G)$.

\begin{algorithm}[hbt!]
\caption{Compute the circular flow number of a (bridgeless) cubic graph $G$}

\label{algo:cfn}
  \begin{algorithmic}
   \STATE \textbf{Optional input:} value for $r$
   \IF{$r$ is defined}
   		\STATE \verb|test_lower_bound| := 1 \ // i.e.\ only test if $\Phi_c(G) \geq r$
   	\ELSE
   		\STATE \verb|test_lower_bound| := 0 \ // i.e.\ compute $\Phi_c(G)$
   \ENDIF
   \STATE \verb|max_min_fraction| := 0
 \FOR{every 2-bisection $(\ca B, \ca W)$ of $G$}   
 	\STATE \verb|min_fraction| := $\infty$
 	\FOR{every subset $X \subseteq V(G)$ for which: $2 \leq |X| \leq \frac{|V(G)|}{2}$ \AND $G[X]$ is connected  \AND $\partial_V(X)$ and $\partial_V(\bar{X})$ are monochromatic of different colors}
 	\STATE Compute $|\partial(X)|$ and $\Delta(X)$
 	\IF{$\frac{|\partial(X)|}{\Delta(X)} <$ \texttt{min\_fraction} }
	 	\STATE \verb|min_fraction| := $\frac{|\partial(X)|}{\Delta(X)}$
	 	\IF{\texttt{test\_lower\_bound} \AND \texttt{min\_fraction} $\leq \frac{r}{r-2}$ }
	 	 	\STATE abort subset search\ // since we are searching for a \verb|min_fraction| $> \frac{r}{r-2}$
	 	 \ENDIF
 	\ENDIF
 	\ENDFOR
	\IF{\texttt{test\_lower\_bound} \AND \texttt{min\_fraction} $> \frac{r}{r-2}$ }
	 	\RETURN $\Phi_c(G) < r$
	 \ENDIF 	
 	\IF{\texttt{min\_fraction} $>$ \texttt{max\_min\_fraction}}
 		\STATE \verb|max_min_fraction| := \verb|min_fraction|
 	\ENDIF	 
 \ENDFOR
	\IF{\texttt{test\_lower\_bound}} 
	 	\RETURN $\Phi_c(G) \geq r$ \ // i.e.\ $\texttt{max\_min\_fraction} \leq \frac{r}{r-2}$
	\ELSE
		\RETURN  $\Phi_c(G) = \frac{2 \cdot \texttt{max\_min\_fraction}}{\texttt{max\_min\_fraction}-1}$
	 \ENDIF 	 
  \end{algorithmic}
\end{algorithm}

We implemented Algorithm~\ref{algo:cfn} in the programming language C. The source code of the program can be obtained from~\cite{cfnwebsite}. In~\cite{BGHM} Brinkmann et al. determined all snarks up to 36 vertices. Using our algorithm, we determined all snarks of circular flow number 5 up to 36 vertices in~\cite{GMM}. We now also determined the circular flow number of all other snarks up to 36 vertices and the results can be found in Table~\ref{table:cfn_values}.

\begin{table}[h!tb]
	\centering

	\begin{tabular}{c  rrrrr r}
		\toprule	
		 \multirow{2}{*}{Order} & \multicolumn{6}{c}{Circular flow number} \multirow{2}{*}{Total} \\
 &  $4+1/4$ & $4+1/3$ & $4+1/2$ & $4+2/3$ & $5$ & \\
		\midrule
10 &  &  &  &  & 1 & 1 \\
18 &  &  & 2 &  &  & 2 \\
20 &  &  & 6 &  &  & 6 \\
22 &  &  & 20 &  &  & 20 \\
24 &  &  & 38 &  &  & 38 \\
26 &  & 57 & 223 &  &  & 280 \\
28 &  & 1 258 & 1 641 &  & 1 & 2 900 \\
30 &  & 10 500 & 17 897 &  & 2 & 28 399 \\
32 &  & 60 008 & 233 042 &  & 9 & 293 059 \\
34 & 3 627 & 372 708 & 3 457 227 &  & 25 & 3 833 587 \\
36 & 199 338 & 3 339 506 & 56 628 773 & 17 & 98 & 60 167 732 \\
		\bottomrule		
	\end{tabular}
	\caption{The values of the circular flow number of all snarks up to 36 vertices.}
	\label{table:cfn_values}
\end{table}

Using our algorithm, we also determined the circular flow number of various famous named snarks. The results are summarized in Table~\ref{table:cfn_famous_snarks} together with the circular flow number of the Flower snarks, which was already determined by Lukot'ka and \v Skoviera in~\cite{Luk_cycle},  of the Generalized Blanu\v sa snarks, which was already determined by Lukot'ka in~\cite{Luk}, and of some Goldberg snarks. We remark that the circular flow number of the Goldberg snarks of order $16k+8$ is not completely determined  for $k\ge4$. It is only known to belong to the interval $[4+\frac{1}{2k+1},4+\frac{1}{k+1}]$, see Section~\ref{Sec:Goldberg} for details.

\begin{table}[h!tb]
	\centering
	\begin{tabular}{rrl}
		\toprule	
Name & Order & $\Phi_c$ \\
		\midrule
(Generalized) Blanu\v sa snarks \cite{Blanusa}, \cite{Watkin} & $8k+2$ & $4+1/2$ \cite{Luk} \\
Flower snark $J_{2k+1}$~\cite{Isaacs} & $8k+4$ & $4+1/k$~\cite{Luk_cycle} \\
Goldberg snark $G_3$~\cite{Goldberg} & $24$ & $4+1/2$ \\
Goldberg snark $G_5$~\cite{Goldberg} & $40$ & $4+1/3$ \\
Goldberg snark $G_7$~\cite{Goldberg} & $56$ & $4+1/4$ \\

Goldberg snark $G_{2k+1}$~\cite{Goldberg} & $16k+8$ & $[4+1/(2k+1),4+1/(k+1)]$ \\

Loupekine snark 1 and 2~\cite{Loupek} & $22$ & $4+1/2$ \\

Celmins-Swart snarks 1 and 2~\cite{CelStw} & $26$ & $4+1/2$ \\

Double star snark~\cite{Isaacs} & $30$ & $4+1/3$ \\

Szekeres snark~\cite{Szek} & $50$ & $4+1/2$ \\

Watkins snark~\cite{Watkin} & $50$ & $4+1/3$ \\
		\bottomrule		
	\end{tabular}
	\caption{The values of the circular flow number of various famous snarks.}
	\label{table:cfn_famous_snarks}
\end{table}

\section{Improving Bounds for the Circular Flow Number of Some Snarks}
\label{sec:new_bounds}

\subsection{Snarks having minimum possible circular flow number}

In~\cite{Luk_cycle} a lower bound on the circular flow number that depends only on the order of a graph is given, that is:

\begin{theorem}[Lukot'ka and \v Skoviera~\cite{Luk_cycle}]\label{thm:luksko}
	Let $G$ be a connected bridgeless cubic graph of order at most $8k+4$ that does not admit any $3$-edge-coloring. Then \[ \Phi_c(G)\ge 4+\frac{1}{k}. \]
\end{theorem}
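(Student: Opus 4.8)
The plan is to translate the desired flow bound into a purely combinatorial statement about $2$-bisections via the identity~(\ref{ratiovscfn}), and then to produce, in \emph{every} $2$-bisection, a single subset whose ratio is small enough. First I would dispose of the easy case: if $\Phi_c(G)\ge 5$ then, since $k\ge1$ gives $4+\frac1k\le 5$, the inequality holds trivially. So assume $\Phi_c(G)<5$. Then $G$ admits a $2$-bisection and~(\ref{ratiovscfn}) is available; moreover $G$ is not $3$-edge-colorable, so $\Phi_c(G)>4$. Because $r\mapsto \frac{r}{r-2}$ is strictly decreasing, the target $\Phi_c(G)\ge 4+\frac1k$ is equivalent to $\frac{\Phi_c(G)}{\Phi_c(G)-2}\le\frac{4k+1}{2k+1}$. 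As~(\ref{ratiovscfn}) expresses the left-hand side as a maximum over $2$-bisections of a minimum over subsets, it suffices to prove that \emph{for every $2$-bisection of $G$ there is a subset $X$ with $\frac{|\partial(X)|}{\Delta(X)}\le\frac{4k+1}{2k+1}$}.

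The key reformulation uses the quantity $\mu(X):=2\Delta(X)-|\partial(X)|$. Writing $\frac{4k+1}{2k+1}=2-\frac1{2k+1}$ and substituting $|\partial(X)|=2\Delta(X)-\mu(X)$, a short computation shows that, for $\Delta(X)>0$, \[ \frac{|\partial(X)|}{\Delta(X)}\le\frac{4k+1}{2k+1}\iff \Delta(X)\le(2k+1)\,\mu(X). \] In particular one needs $\mu(X)\ge1$. Since we may always assume $|X|\le\frac{|V(G)|}2\le 4k+2$, and hence $\Delta(X)\le 4k+2=2(2k+1)$, any subset with $\mu(X)\ge2$ already does the job; the only delicate regime is $\mu(X)=1$, where one must additionally guarantee $\Delta(X)\le 2k+1$. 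Thus the whole problem reduces to exhibiting, in each $2$-bisection, a subset with $\mu(X)\ge1$ and $\Delta(X)\le(2k+1)\,\mu(X)$.

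To build such a set I would work from the expression $\mu(X)=2e(X)-b_X-5w_X$, where $e(X)$ is the number of edges inside $X$; by Lemma~\ref{lem:monochrom} and Remark~\ref{summary_properties} one may restrict attention to connected sets all of whose white vertices are interior, for which this collapses to $\mu(X)=2\bigl(e_{bb}(X)-e_{ww}(X)\bigr)-\Delta(X)$, with $e_{bb}(X)$ and $e_{ww}(X)$ the numbers of black--black and white--white edges inside $X$. The strategy is then to grow $X$ from a scaffold of black edges (each contributing to $e_{bb}$) and to absorb white vertices all of whose neighbours already lie in $X$ --- preferably isolated white vertices, so that $e_{ww}(X)$ stays $0$: each such absorption raises $\mu$ by $1$ and lowers $\Delta$ by $1$. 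Using that every monochromatic component has at most two vertices, that $G$ is cubic and bridgeless, and that $|V(G)|\le 8k+4$ caps $\Delta(X)$, one would track $\mu$ and $\Delta$ along this growth and force the ratio $\Delta(X)/\mu(X)$ down to the required value $2k+1$.

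The main obstacle is precisely this last step, and it is where the order hypothesis must be spent exactly. Every ``obvious'' candidate --- all of $\ca B$, or $\ca B$ together with some white vertices, or any union of black edges --- yields ratio \emph{exactly} $2$, i.e.\ $\mu=0$; the saving below $2$ is only $\frac1{2k+1}=\Theta(1/|V(G)|)$, a genuinely global effect. Hence the heart of the argument is to show that non-$3$-edge-colorability, together with $|V(G)|\le 8k+4$, makes a strictly positive $\mu$ unavoidable \emph{and} attainable with $\Delta\le(2k+1)\,\mu$ --- equivalently, that no $2$-bisection can be ``locally perfect'' at every subset. Proving this uniformly over all $2$-bisections (in particular the near-colorable ones, where the minimum ratio is largest and the bound is tight) and pinning the exact constant $2k+1$, rather than a weaker bound of order $|V(G)|$, is the crux; I expect to combine a discharging/counting argument over the monochromatic components with the connectivity of good sets (Lemma~\ref{lem:conn}) and the monochromatic-boundary structure (Lemma~\ref{lem:monochrom}, Corollary~\ref{cor:monochrom}), and to treat the smallest orders (e.g.\ $k=1$) separately.
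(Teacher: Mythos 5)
Your reduction is correct as far as it goes: the case split on $\Phi_c(G)\ge 5$, the monotonicity of $r\mapsto\frac{r}{r-2}$, the translation via~(\ref{ratiovscfn}) into ``every $2$-bisection contains a subset $X$ with $\frac{|\partial(X)|}{\Delta(X)}\le\frac{4k+1}{2k+1}$'', and the algebra showing this is equivalent to $\Delta(X)\le(2k+1)\,\mu(X)$ with $\mu(X)=2\Delta(X)-|\partial(X)|=2\bigl(e_{bb}(X)-e_{ww}(X)\bigr)-\Delta(X)$ for sets whose white vertices are interior --- all of this checks out. But the proof stops exactly where the theorem begins. The entire content of the statement is the claim that in \emph{every} $2$-bisection of a non-$3$-edge-colorable cubic graph on at most $8k+4$ vertices such a subset exists, and for that you offer only a description of a hoped-for growth/discharging procedure (``grow $X$ from a scaffold of black edges and absorb isolated white vertices''), with no invariant, no termination argument, and no point at which non-$3$-edge-colorability is actually used. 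As you yourself observe, every local candidate has $\mu=0$, so the inequality $\mu(X)\ge1$ with $\Delta(X)\le(2k+1)\mu(X)$ is a global phenomenon; asserting that it ``must be unavoidable'' is precisely the theorem, not a proof of it. This is a genuine gap, not a routine verification left to the reader.

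For context: the paper does not prove this theorem either --- it is quoted from Lukot'ka and \v Skoviera~\cite{Luk_cycle}, whose argument does not go through bisections at all but works directly with a circular nowhere-zero $r$-flow and a counting argument tied to the cycle rank of the graph (for a cubic graph of order $n$ this rank is $\frac{n}{2}+1$, which is where the bound on the order enters). If you want to complete a bisection-based proof along your lines, the missing step is to show that a $2$-bisection in which \emph{every} subset satisfies $\Delta(X)>(2k+1)\mu(X)$ would force $G$ to be $3$-edge-colorable or to have more than $8k+4$ vertices; nothing in your sketch currently produces either conclusion. Alternatively, reprove the result by following the original flow-theoretic route rather than inverting~(\ref{ratiovscfn}).
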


In the same paper it is shown that Flower snarks form a family of snarks of order $8k+4$ that attain this bound with equality, more precisely the Flower snark $J_{2k+1}$ has $8k+4$ vertices and circular flow number $4+\frac{1}{k}$, which shows that the upper bound given in \cite{Steffen} was indeed the optimal one.

The paper also reports that Edita M\'{a}\v{c}ajov\'{a} (using a computer search from~\cite{MaRa06}) determined that the two Blanu\v sa snarks on $18 = 8\cdot 2 + 2$ vertices have circular flow number $4+\frac{1}{2}$ and that there are exactly 57 snarks on $26 = 8\cdot 3 + 2$ vertices with circular flow number $4+\frac{1}{3}$. In~\cite{Luk_cycle} Lukot'ka and \v Skoviera mention that this strongly suggests that there exists an infinite family of snarks of order $8k+2$ with circular flow number $4+\frac{1}{k}$. They also report that they are not aware of any graphs of order $8k$ or $8k-2$ with circular flow number $4+\frac{1}{k}$.

In Table~\ref{table:cfn_values} from the previous section, we determined the circular flow number of all snarks up to 36 vertices. The graphs from Table~\ref{table:cfn_values} with the minimum circular flow number for each order
can be downloaded from the \textit{House of Graphs}~\cite{hog} at \url{http://hog.grinvin.org/Snarks}. As can be seen from that table, none of the snarks up to 36 vertices of order $8k$ or $8k-2$ has circular flow number $4+\frac{1}{k}$.

We now present, for every positive integer $k$, a family $\ca S =\{S_k\}$ consisting of snarks of order $8k+2$ and having circular flow number $4+\frac{1}{k}$.
Every snark $S_k$ is obtained by performing a dot product of $S_{k-1}$ and a copy of the Petersen graph with two adjacent vertices removed in a suitable way. We recall the definition of dot product of two connected cubic graphs, say $G$ and $H$, on at least $6$ vertices. Consider $G'= G - \{ab, cd\}$, where $ab$ and $cd$ are independent edges of $G$. Let $H'=H-\{x,y\}$, where $x$ and $y$ are adjacent vertices in $H$, and let $u,v$ and $w,z$ be the other two neighbours of $x$ and $y$, respectively. Then the dot product $G \cdot H$ is defined as the graph
$$(V (G) \cup V (H'), E(G') \cup E(H') \cup \{au , bv , cw , dz \}).$$

Indeed, if the way in which vertices $a,b,c,d$ and $u,v,w,z$ are linked is not specified, there are several ways to form the dot product for selected edges $ab$, $cd$ and vertices $x$ and~$y$. This order will be relevant in our construction as only one specific way seems to work for our aims.

We inductively define the snark $S_k$ as follows:

\begin{itemize}
	\item Let $S_1$ be the Petersen graph.
	\item Let $S_2$ be the Blanu\v sa snark obtained by performing the dot product between two copies $P_1$ and $P_2$ of the Petersen graph where, we select a pair of edges of $P_1$ at distance $1$ (where by distance we mean the number of edges of the shortest path connecting two ends of those edges) and a pair of adjacent vertices of $P_2$.
	\item For $k\ge3$, $S_k$ is obtained with a dot product of $S_{k-1}$ and a copy of the Petersen graph where we select a pair of adjacent vertices of the Petersen graph (by symmetry every pair) and the two independent edges of $S_k$ as indicated in Figure~\ref{fig:Sk} (bold edges).
\end{itemize}

\begin{figure}[htb!]
			\centering
			\includegraphics[width=14cm]{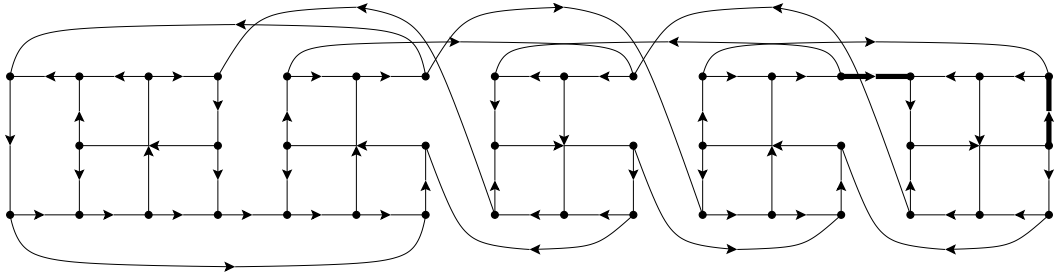}
	\caption{The Snark $S_5$.}\label{fig:Sk}
\end{figure}

It is easy to check that $S_2$ has order $18=8\cdot2+2$ and that it has circular flow number  $4+\frac{1}{2}$. 

Now, we assign to every $S_k$ an orientation as in Figure~\ref{fig:Sk}: the generalization to an arbitrary value of $k$ is clear when we note that any new block with 8 vertices (obtained by dot product with a new copy of the Petersen graph) has the opposite orientation with respect to the last block in $S_{k-1}$.

\begin{theorem}
For any positive integer $k$, $S_k$ is a snark of order $8k+2$ with circular flow number $4+\frac{1}{k}$.
\end{theorem}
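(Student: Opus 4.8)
The plan is to establish the three assertions — order $8k+2$, the snark property, and $\Phi_c(S_k)=4+\frac1k$ — separately, splitting the last one into a lower and an upper bound. The order is immediate by induction: a dot product with a copy of the Petersen graph $P$ adjoins $|V(P)|-2=8$ vertices, so $|V(S_k)|=|V(S_{k-1})|+8$, and since $|V(S_1)|=10=8\cdot1+2$ we get $|V(S_k)|=8k+2$. For the snark property I would invoke the classical fact (due to Isaacs) that a dot product of two snarks, with the two independent edges and the removed adjacent pair chosen so that no short cycle is created, is again cyclically $4$-edge-connected, has girth at least $5$, and is not $3$-edge-colorable. Since $S_1$ and every Petersen copy used are snarks, and the edge/vertex selections prescribed in Figure~\ref{fig:Sk} avoid short cycles, induction gives that each $S_k$ is a snark.

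For the circular flow number, the lower bound comes for free: $S_k$ is a connected bridgeless cubic graph that is not $3$-edge-colorable and has order $8k+2\le 8k+4$, so Theorem~\ref{thm:luksko} yields $\Phi_c(S_k)\ge 4+\frac1k$. It therefore remains to prove the matching upper bound $\Phi_c(S_k)\le 4+\frac1k$, which I would do by exhibiting an explicit circular nowhere-zero $(4+\frac1k)$-flow using the orientation already fixed in Figure~\ref{fig:Sk}; equivalently, a flow function $f\colon E(S_k)\to[1,\,3+\frac1k]$ that is conservative at every vertex. Note that this direct approach avoids the relation~(\ref{ratiovscfn}), which would require knowing $\Phi_c(S_k)<5$ beforehand; the case $k=1$ is simply the known value $\Phi_c(\text{Petersen})=5$, consistent with $4+\frac11$.

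The construction is organised block by block. For $k\ge2$ the graph $S_k$ consists of one distinguished end-block, a full Petersen copy $P_1$ with two edges removed, followed by $k-1$ fragments (each a Petersen copy with an adjacent pair deleted), joined in a line by the four dot-product edges $\{au,bv,cw,dz\}$, with consecutive blocks carrying opposite orientations. I would fix once and for all a local flow pattern on a single fragment, forced by conservation inside it, leaving exactly the needed degrees of freedom on its four connecting half-edges, and then prescribe the values on the connecting edges as a sequence indexed by the block number $i\in\{1,\dots,k\}$. Because adjacent blocks are oppositely oriented, conservation across a connecting edge turns into a constant arithmetic step between the values of consecutive blocks; solving this recurrence yields equally spaced values that sweep the interval $[1,\,3+\frac1k]$. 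One then checks conservation at the finitely many types of internal vertices of a fragment and at the distinguished end-block, and verifies that no prescribed value leaves $[1,\,3+\frac1k]$.

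The main obstacle is the design and verification of this flow, not the topology. The delicate points are: (i) identifying the local flow pattern on a fragment that is compatible with a Petersen orientation and exposes exactly the right freedom on the four connecting half-edges; (ii) showing that the alternating orientation forces the connecting-edge values to differ by a constant step proportional to $\tfrac1k$, so that over the $k$ blocks they accumulate linearly and the extreme values are precisely $1$ and $3+\frac1k$, never exceeding the admissible interval; and (iii) matching this chain at both ends — the full Petersen end-block $P_1$ and the terminal fragment — so that conservation closes up, with $S_1$ and the Blanu\v sa snark $S_2$ (already known to satisfy $\Phi_c=4+\frac12$) serving as the small cases. I expect (ii) to be the crux: a naive induction on $k$ cannot work, since the admissible interval $[1,\,3+\frac1k]$ shrinks as $k$ grows, so the flow must be rebuilt globally for each $k$, the $k$-dependence entering exactly through the uniform spacing of the connecting-edge values.
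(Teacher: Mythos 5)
Your treatment of the order, the snark property, and the lower bound matches the paper's: the order follows by induction since each dot product with a Petersen copy adds $8$ vertices, the snark property from the classical fact that a dot product of snarks is a snark, and the lower bound directly from Theorem~\ref{thm:luksko}. The gap is in the upper bound, which is the entire substance of the theorem. You correctly identify that one must exhibit a circular nowhere-zero $(4+\frac{1}{k})$-flow, and you even flag your point (ii) --- controlling the $k$-dependence so that all values stay in $[1,3+\frac{1}{k}]$ --- as the crux, but you then leave exactly that point unresolved. The existence of a suitable local flow pattern on a fragment, the claim that conservation across the connecting edges reduces to a constant arithmetic step proportional to $\frac{1}{k}$, and the closing of the chain at the two end-blocks are all asserted as expectations rather than verified; no candidate values are written down and none of the vertex-conservation checks is carried out. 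As it stands, the proposal is a plan for a proof of the upper bound, not a proof.

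The paper closes this gap with a specific device you are missing: it first constructs a $4$-flow $f$ of $S_k$ whose unique zero edge is a fixed edge $e$, and then exhibits $k$ oriented cycles $C_1,\dots,C_k$ such that every $C_i$ contains $e$ and every edge of flow value $3$ under $f$ lies on exactly one $C_i$. Adding $\frac{1}{k}$ along each $C_i$ gives $e$ the value $k\cdot\frac{1}{k}=1$, pushes every flow-$3$ edge to exactly $3+\frac{1}{k}$, and keeps all other edges inside $[1,3+\frac{1}{k}]$. This decomposition makes the $k$-dependence trivial: the $4$-flow and the cycles extend block by block independently of $k$, and the only $k$-dependent quantity is the increment $\frac{1}{k}$ --- precisely the difficulty your arithmetic-progression scheme would otherwise have to confront by hand. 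The equally spaced values you predict on the connecting edges are in fact the traces of these nested cycles, so completing your version would essentially amount to rediscovering this structure.
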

\begin{proof}
It is well-known that the dot product of two snarks is a snark, and an easy computation shows that the order of $S_k$ is equal to $8k+2$.
Theorem~\ref{thm:luksko} gives the lower bound $4+\frac{1}{k}$ for all snarks of order $8k+2$. Then, we only need  to show that a nowhere-zero flow with maximum flow value $3+\frac{1}{k}$ can be defined in $S_k$. 
We construct such a flow in the following way. Firstly, we exhibit a $4$-flow $f$ of $S_k$ which has flow value zero only for a specific edge $e$ (the dashed edge in Figure~\ref{fig:Sk4flow}). Then, we construct a set of $k$ oriented (with respect to the given orientation) cycles, say $C_1,\ldots,C_k$, in $S_k$ such that:
\begin{itemize}
\item the edge $e$ belongs to every $C_i$;
\item every edge of $S_k$ having flow value $3$ in $f$ belongs to exactly one of the cycles $C_i$'s.
\end{itemize}

Such properties assure that if we construct a flow $f'$ starting from $f$ and by adding a flow equal to $\frac{1}{k}$ along every oriented cycle $C_i$ then we obtain a nowhere-zero $(4+\frac{1}{k})$-flow of $S_k$. Indeed, the former property implies that the edge $e$ has flow value $k \cdot \frac{1}{k}=1$ in $f'$, and the latter one implies that every other edge has flow value in the interval $[1,3+\frac{1}{k}]$.

\begin{figure}[htb!]
			\centering
			\includegraphics[width=14cm]{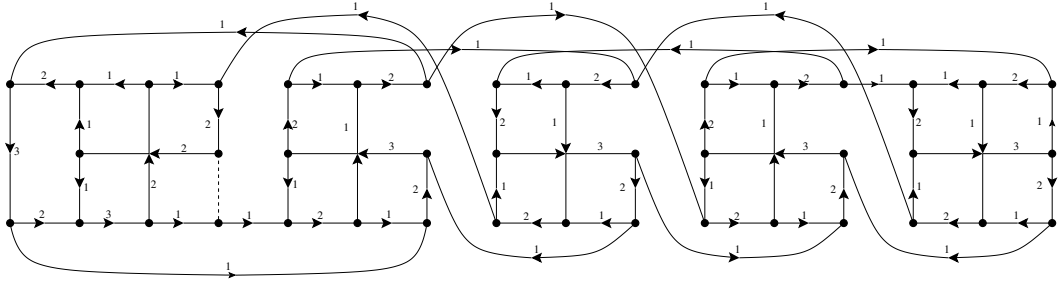}
	\caption{A $4$-flow in $S_5$: the dashed edge is the unique one with flow value zero.}\label{fig:Sk4flow}
\end{figure}

Figure~\ref{fig:Skcycles} shows the set of five cycles for the case $k=5$. 
We refer to this example to briefly explain the general construction of the cycles $C_1,\ldots,C_k$. 

\begin{figure}[htb!]
			\centering
			\includegraphics[width=12cm]{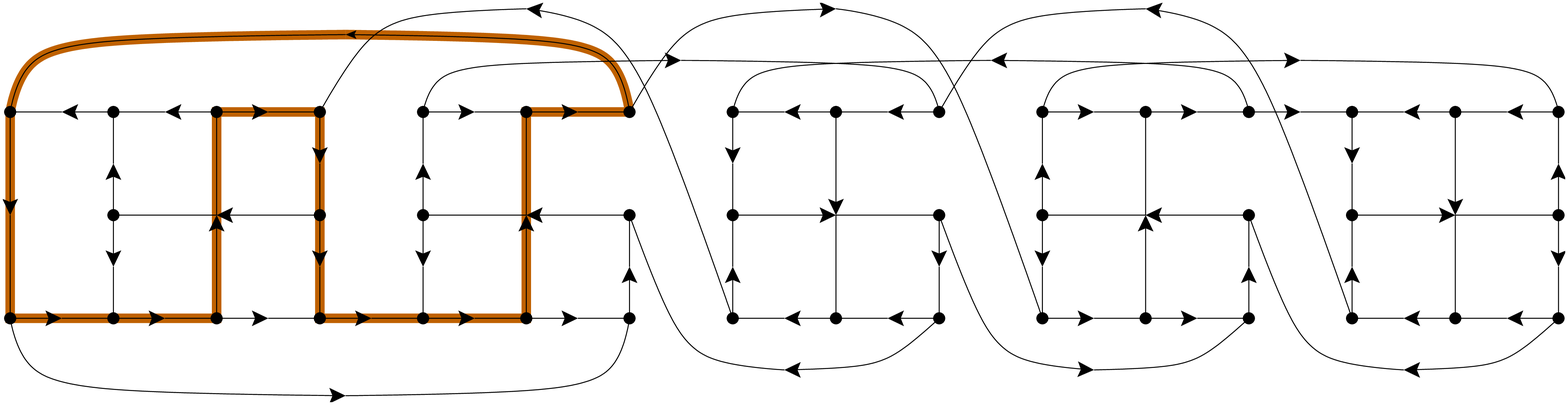}
			
			\includegraphics[width=12cm]{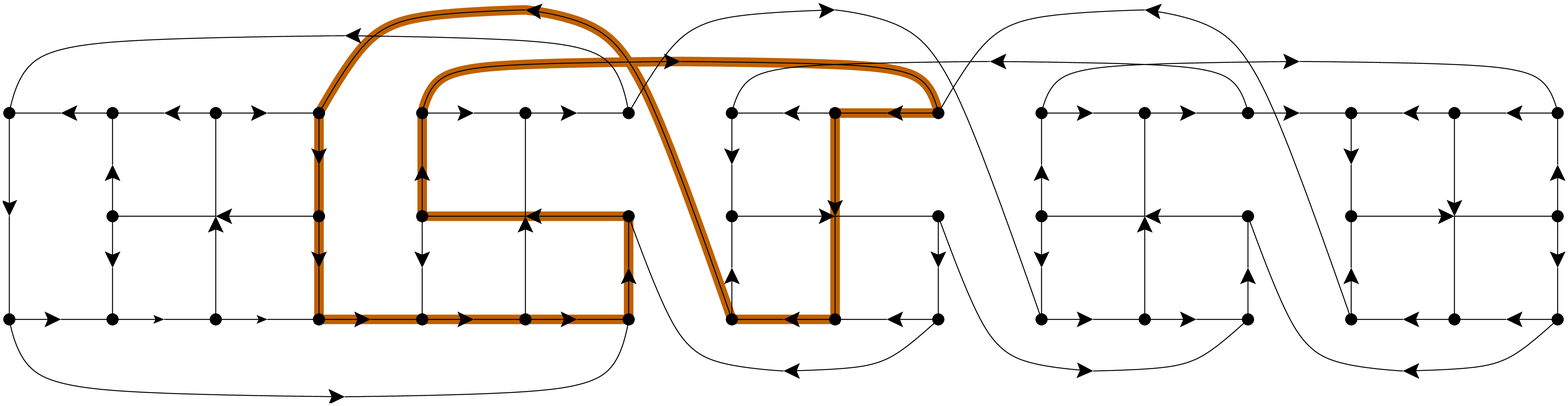}
			
			\includegraphics[width=12cm]{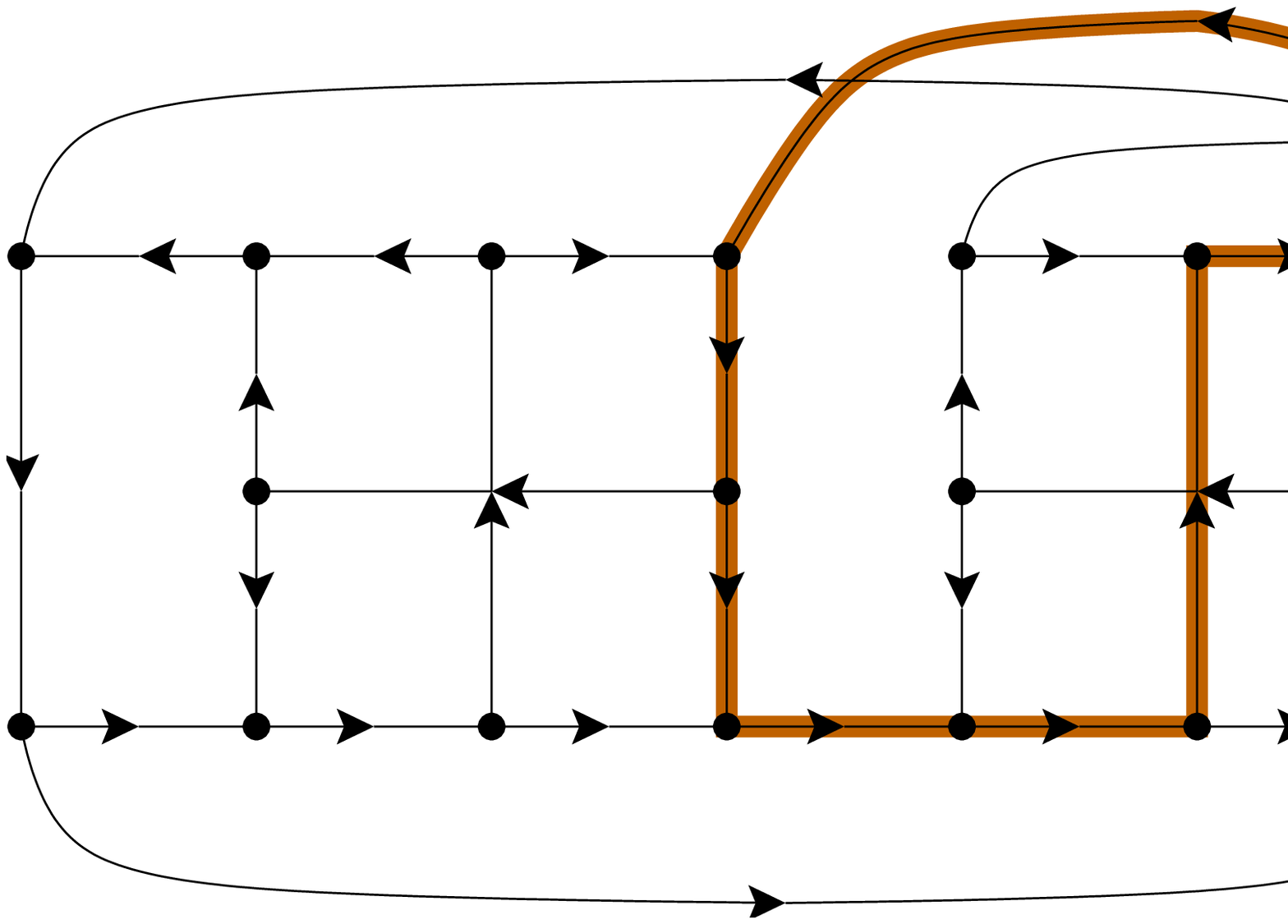}
			
			\includegraphics[width=12cm]{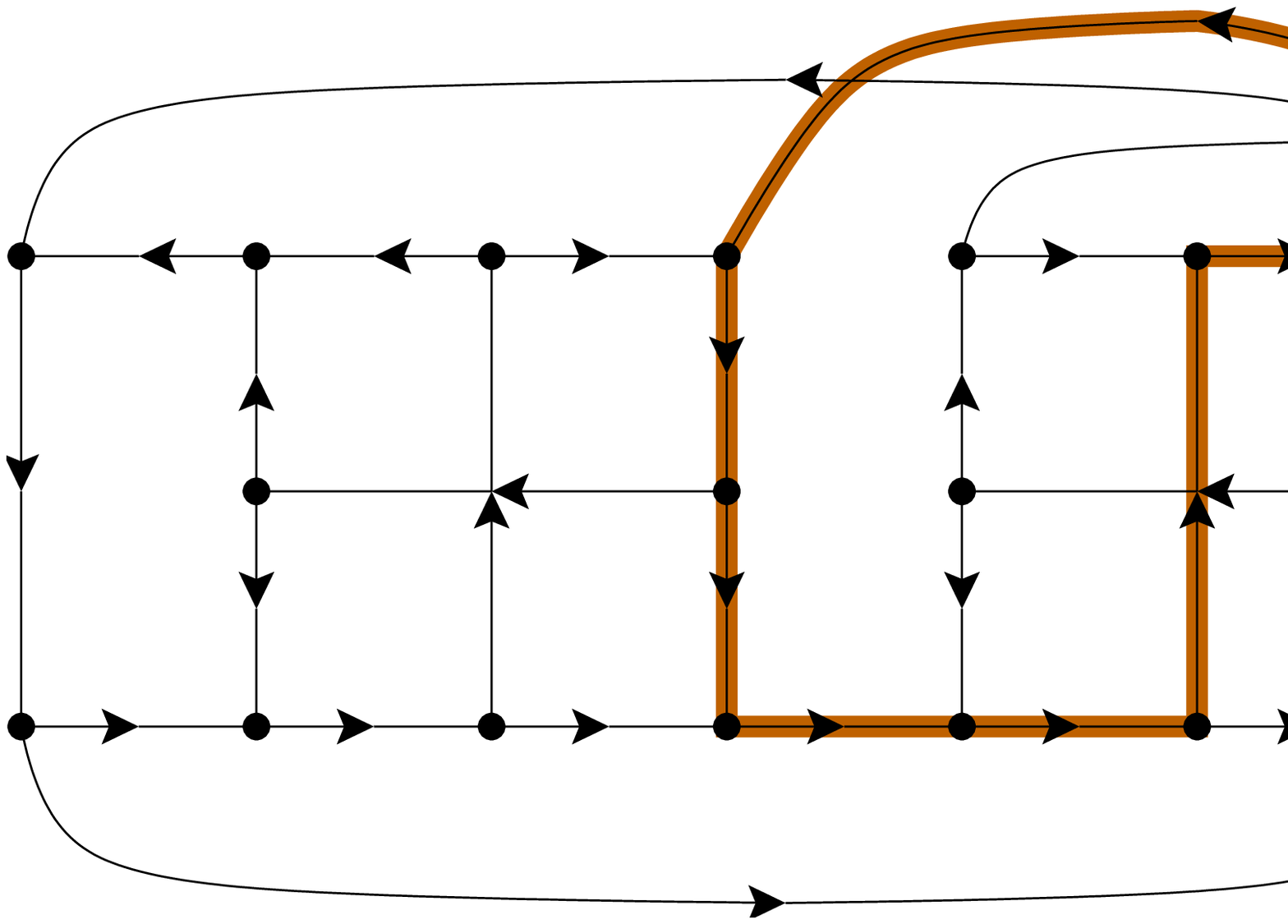}
			
			\includegraphics[width=12cm]{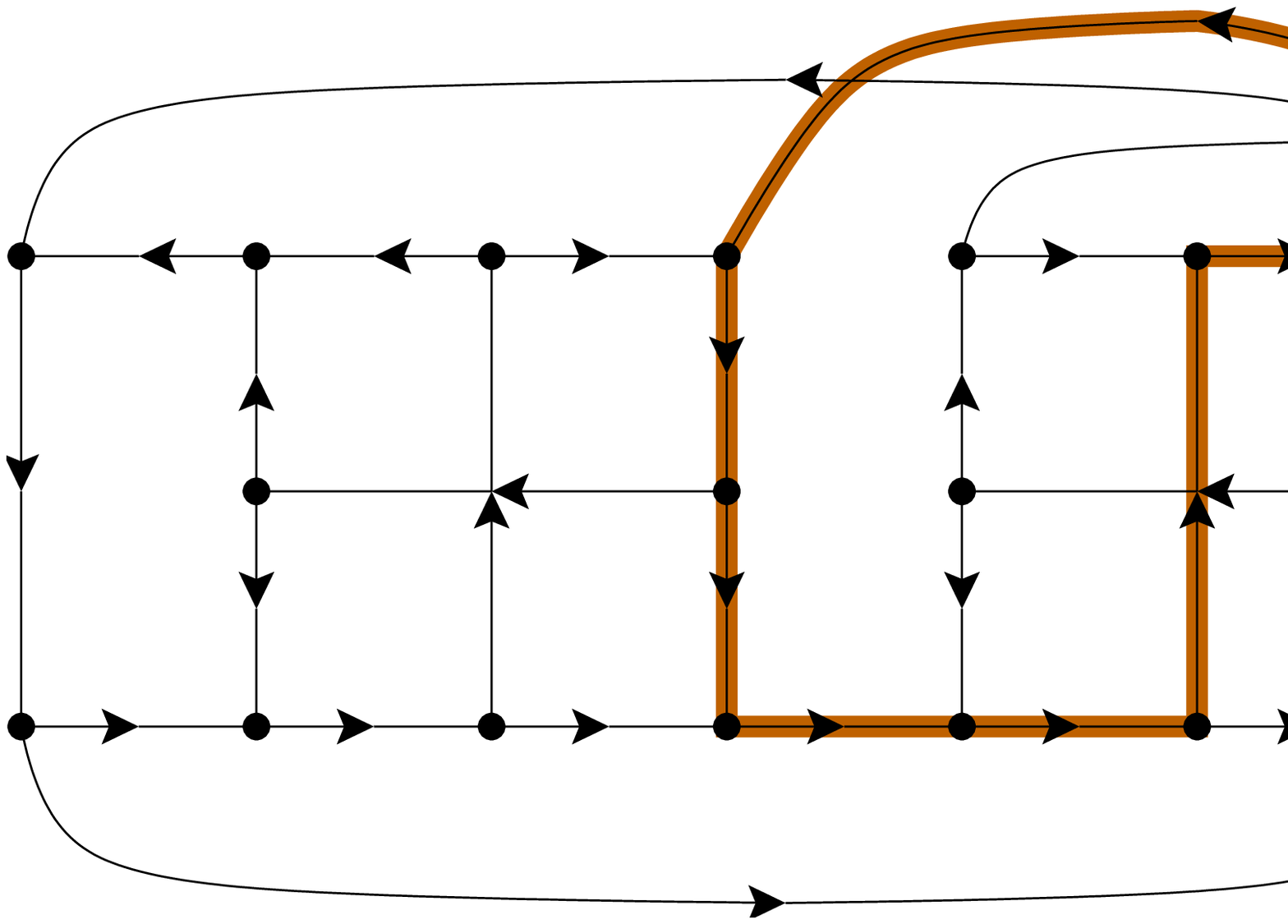}
	\caption{Cycles $C_1,C_2,C_3,C_4$ and $C_5$ in $S_5$.}\label{fig:Skcycles}
\end{figure}

For any $k>1$, the cycle $C_1$ of $S_k$ is the highlighted cycle in the first graph of Figure~\ref{fig:Skcycles}. Indeed, note that $C_1$ does not depend on how many times a dot product is performed to obtain $S_k$. Moreover, it contains the two edges with flow value $3$ in $f$ in the leftmost copy of the Petersen graph.

Every other $C_i$, for $1<i<k$, is constructed analogously as it is shown for the second, third and fourth cycle in Figure~\ref{fig:Skcycles}. In particular, note that also in this case $C_i$ does not depend on the value of $k$, if $k>i$, and the unique edge of $S_k$ with flow value $3$ in the cycle $C_i$ is the horizontal edge in the $i^{th}$ copy of the Petersen graph. 

Finally, we construct the last cycle $C_k$ in analogy to the construction of the fifth cycle in Figure~\ref{fig:Skcycles}. Again the unique edge with flow value $3$ in the cycle $C_k$ is the horizontal edge in the rightmost copy of the Petersen graph.

All of these $k$ oriented cycles pass through the dashed edge $e$ in Figure~\ref{fig:Sk4flow} and, as remarked, every edge of $S_k$ with flow value $3$ belongs to exactly one of them. Then, we can construct a nowhere-zero $(4+\frac{1}{k})$-flow of $S_k$ and the assertion follows.
\end{proof}

For the sake of completeness, we remark that all snarks $S_k$ constructed here are permutation snarks of order $8k+2$, i.e.\ $S_k$ admits a $2$-factor consisting of two chordless cycles of length $4k+1$. 

\subsection{A new upper bound for the Circular Flow Number of the Goldberg Snarks}\label{Sec:Goldberg}
\label{sec:goldberg}

The Goldberg snarks $\{G_{2k+1}\}_{k\in\N}$ are another classical family of snarks. The snark $G_{2k+1}$ is constructed in the following way. Let $P^-$ be the Petersen graph minus two vertices at distance $2$, take $2k+1$ copies $P_1^-,\dots,P_{2k+1}^-$ of $P^-$ and glue them together as shown in Figure~\ref{Fig:goldberg}. 

\begin{figure}[h]
		\centering
		\includegraphics[width=10cm]{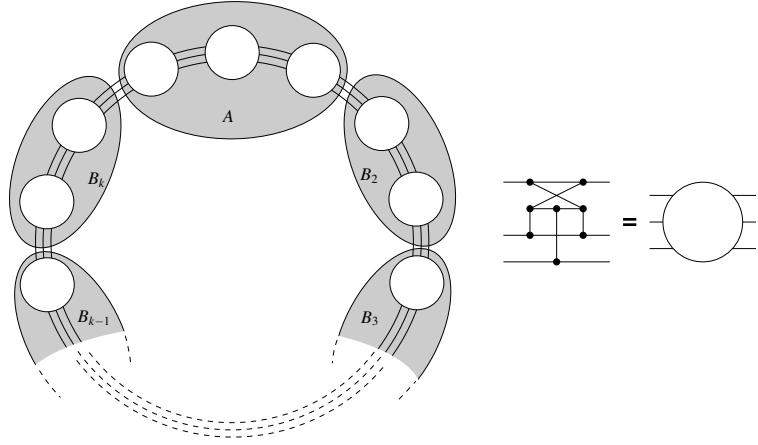}
	\caption{The Goldberg's snark $G_{2k+1}$ on $8(2k+1)$ vertices.}\label{Fig:goldberg}
\end{figure}

In~\cite{Luk} the circular flow number of the Goldberg snark $G_{2k+1}$ is shown to be inside the interval $[4+\frac{1}{2k+1}, 4+\frac{1}{k}]$. 
By using the algorithm described in Section~\ref{sec:algorithm}, we have shown (see Table~\ref{table:cfn_famous_snarks}) that $\Phi_c(G_{2k+1})=4+\frac{1}{k+1}$ for $k=1,2,3$. 
Here, we show that the value $4+\frac{1}{k+1}$ is an upper bound for the circular flow number of $G_{2k+1}$ for all possible $k$, thus improving the best known upper bound. 

\begin{proposition}\label{prop:goldberg}
Let $G_{2k+1}$ be the Goldberg snark of order $8(2k+1)$. Then, $$\Phi_c(G_{2k+1}) \leq 4+\frac{1}{k+1}.$$  
\end{proposition}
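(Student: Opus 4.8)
The plan is to exhibit an explicit circular nowhere-zero $\left(4+\frac{1}{k+1}\right)$-flow on $G_{2k+1}$, which immediately gives the stated upper bound. I would follow the same two-layer strategy used above for the family $\ca S=\{S_k\}$: start from an integer $4$-flow with a single zero edge, and then raise it to a fractional nowhere-zero flow by adding small amounts of flow along a carefully chosen collection of oriented cycles.

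First I would fix an orientation of $G_{2k+1}$ exploiting its cyclic structure as a ring of $2k+1$ copies $P_1^-,\dots,P_{2k+1}^-$ of $P^-$, and construct an integer $4$-flow $f$ taking values in $\{1,2,3\}$ on every edge except a single distinguished edge $e$, where $f(e)=0$. Since $G_{2k+1}$ is a snark it admits no nowhere-zero $4$-flow, so some zero edge is unavoidable; the task is to choose a flow pattern on one copy of $P^-$ that is compatible with the gluing and periodic enough that, when repeated around the ring, all the would-be zeros cancel at the interfaces between consecutive blocks, leaving exactly one surviving zero at $e$.

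Second I would construct $k+1$ oriented cycles $C_1,\dots,C_{k+1}$, all passing through $e$ in the same direction, such that every edge carrying flow value $3$ under $f$ lies on exactly one of the $C_i$ and is traversed in its flow direction, while no edge carrying flow value $1$ is ever traversed against its flow direction. Adding $\frac{1}{k+1}$ units of flow along each $C_i$ then raises $e$ to $(k+1)\cdot\frac{1}{k+1}=1$, lifts each flow-$3$ edge to exactly $3+\frac{1}{k+1}$, and keeps every remaining edge inside $\left[1,\,3+\frac{1}{k+1}\right]$; the result is a circular nowhere-zero $\left(4+\frac{1}{k+1}\right)$-flow.

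The delicate point, and the main obstacle, is the explicit construction of these $k+1$ cycles together with the verification that the modified flow stays in $\left[1,\,3+\frac{1}{k+1}\right]$ on every edge. Note that $G_{2k+1}$ has $2k+1$ blocks but we are allowed only $k+1$ cycles, so they cannot simply loop once per block; instead each cycle must thread through several consecutive copies of $P^-$ in such a way that their union covers the flow-$3$ edges exactly once, and the orientations must be controlled so that overlaps on the flow-$1$ and flow-$2$ edges never drive a value outside the admissible interval. I expect the rotational symmetry of $G_{2k+1}$ to reduce this to designing one generic cycle passing through a bounded number of blocks and then rotating it around the ring, with one or two specially shaped cycles (analogous to the roles of $C_1$ and $C_k$ in the $\ca S$ construction) closing up the family through the defect edge $e$.
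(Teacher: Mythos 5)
There is a genuine gap: your proposal is a strategy outline rather than a proof. The entire mathematical content of the statement is the existence of a circular nowhere-zero $\bigl(4+\frac{1}{k+1}\bigr)$-flow on $G_{2k+1}$, and you defer exactly that: you do not exhibit the integer $4$-flow with a single zero edge on the ring of $2k+1$ blocks, you do not construct the $k+1$ oriented cycles, and you do not verify the covering and orientation conditions you impose on them. You even flag this as ``the main obstacle.'' The conditions you require are genuinely delicate and not obviously satisfiable: all $k+1$ cycles must pass through the one defect edge $e$ (so the two other edges at each endpoint of $e$ absorb all $k+1$ cycles between them, and you must check the resulting $\pm\frac{a}{k+1}$ shifts keep every value in $[1,3+\frac{1}{k+1}]$); every flow-$3$ edge of $f$ must lie on \emph{exactly} one cycle and be traversed forward; and the cycles must thread a ring of $2k+1$ blocks using only $k+1$ cycles. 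Nothing in the proposal establishes that a $4$-flow $f$ with a single zero edge even exists on $G_{2k+1}$ with its flow-$3$ edges positioned so that such a cycle system can be found. Without the explicit construction (or at least a symmetry argument reducing it to a finite, verified configuration), the bound is not proved.

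For comparison, the paper avoids the global cycle-threading problem entirely. It decomposes $G_{2k+1}$ into one multipole $A$ consisting of three consecutive blocks and $k-1$ multipoles $B_t$ each consisting of two consecutive blocks (so $3+2(k-1)=2k+1$ blocks in total), exhibits an explicit nowhere-zero circular flow on each multipole type with all values in $[1,3+\frac{1}{k+1}]$ and with matching boundary values, and then glues. The verification is local to the two fixed multipole types, independent of $k$, which is what makes the argument complete with only two figures. If you want to salvage your ``$4$-flow plus cycles'' route, you would need to supply the analogue of Figures~\ref{fig:Sk4flow} and~\ref{fig:Skcycles} for the Goldberg family, i.e.\ concrete data playing the role that the multipole flows play in the paper's proof.
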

\begin{proof}
The Goldberg snark $G_{2k+1}$ consists of $2k+1$ copies of the Petersen graph minus two vertices at distance $2$ glued together as shown in Figure~\ref{Fig:goldberg}. Define the multipole $A$ to be three consecutive blocks of $G_{2k+1}$ and each multipole $B_t$ to be two consecutive blocks of $G_{2k+1}$, for $t=2,\dots, k.$ 
We define on these multipoles the nowhere-zero circular flow represented in Figures~\ref{fig:mult_A} and~\ref{fig:mult_B}. Note that the flow value on each edge of these multipoles is between $1$ and $3+\frac{1}{k+1}$. 

\begin{figure}[htb!]
			\centering
			\includegraphics[width=15cm]{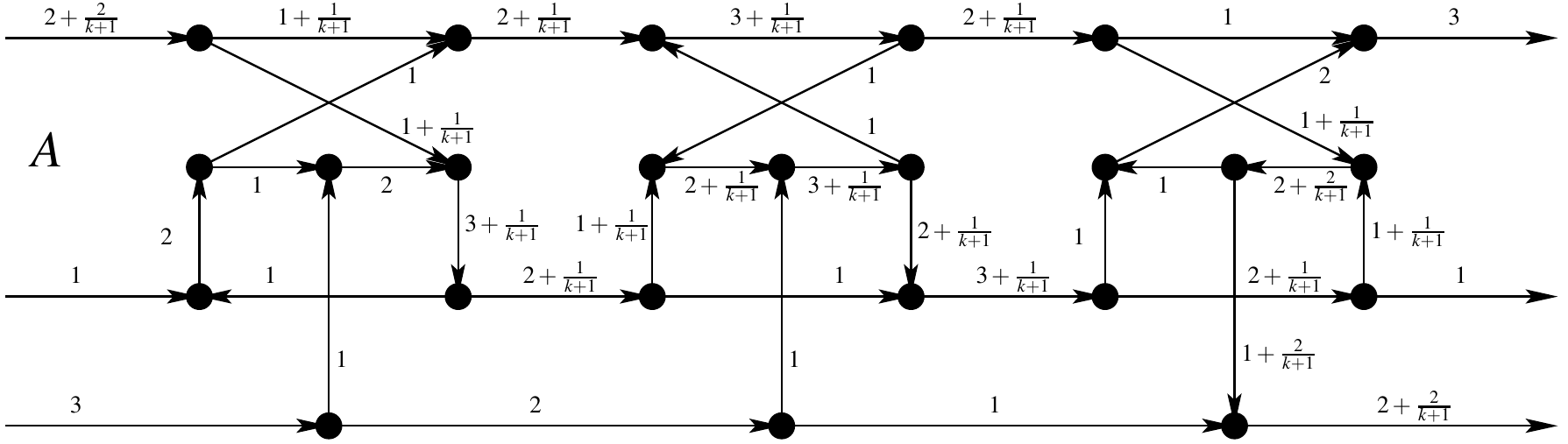}
			\caption{Flow in the multipole $A$.}\label{fig:mult_A}
\end{figure}

\begin{figure}[htb!]
	\centering
	\includegraphics[width=10cm]{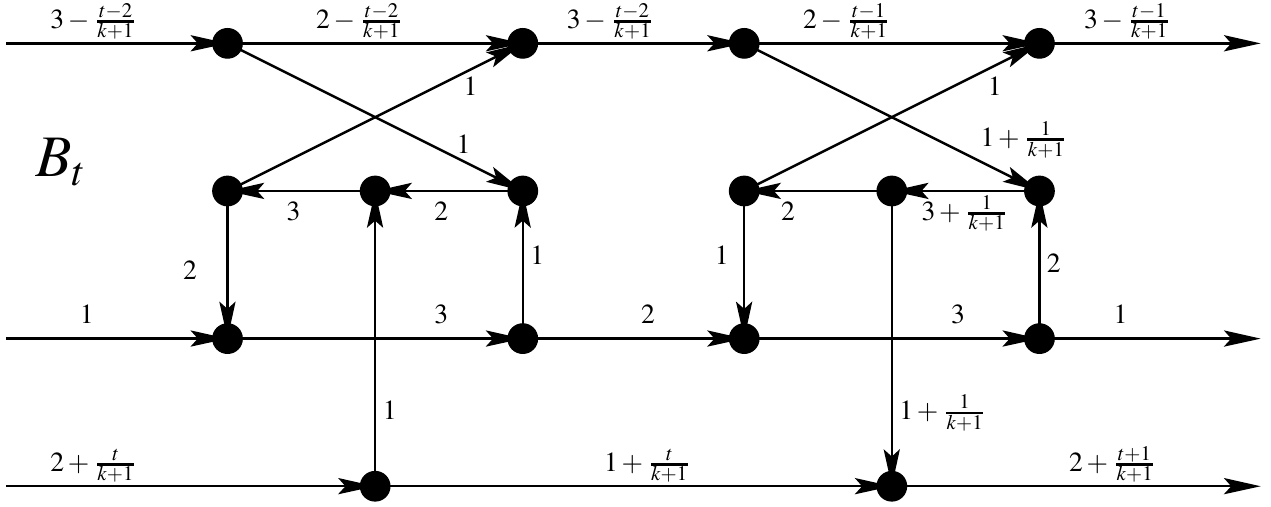}
	\caption{Flow in the multipole $B_{t}$.}\label{fig:mult_B}
\end{figure}

Moreover, we can glue them together as shown in Figure~\ref{Fig:goldberg}, in such a way that the Goldberg snark $G_{2k+1}$ is constructed. It follows that a nowhere-zero circular $(4+\frac{1}{k+1})$-flow is defined in $G_{2k+1}$, for every positive integer $k$.
\end{proof}

\section{A Certificate for non-Cubic Graphs with Circular Flow Number at Least 5} \label{Sec:certificatecircularflownumber5}

In general the problem of establishing the circular flow number of a bridgeless graph is hard to solve. Many flow problems indeed can be reduced to the class of cubic graphs where they can be attacked much more powerfully by making use of known structural properties of cubic graphs. For instance, it is a general result that every bridgeless graph has a nowhere-zero $k$-flow if and only if every bridgeless cubic graph has one. The still open $5$-flow Conjecture from Tutte claims that every bridgeless graph has a nowhere-zero $5$-flow. This puts interest in studying the structure of those graphs having circular flow number (at least) $5$, see for instance~\cite{EMT16} and~\cite{GMM}. In this section we show that the problem of deciding whether $\Phi_c(G)\ge 5 $ for a graph $G$ can be reduced to deciding whether $\Phi_c(H)\ge 5$ for every $H\in \ca H$, where $\ca H $ is a finite class of cubic graphs that can be constructed by applying suitable operations to $G$.

\begin{definition}	An {\bf expansion} of a vertex $v \in V(G)$ into a graph $K$ is obtained by the replacement of $v$ by $K$ and by adding as many edges  with one end in $V(G)-\{v\}$ and the other end in $V(K)$ as the degree of $v$ in $V(G)$.
\end{definition}

\begin{remark}\label{expansions}
It is easy to prove that each expansion of $G$ has circular flow number larger than or equal to the circular flow number of $G$.
\end{remark}

\begin{definition}
	Let $G$ be a graph and $v\in V(G)$ with $d_G(v)\ge4$. Define $\ca H^v(G)$ to be the class of all graphs that can be obtained from $G$ by expanding $v$ into a copy of $K_2$, in such a way that no vertex of degree $1$ or $2$ is created in the resulting graph.
\end{definition}

Note that, if we denote by $d$ the degree of $v\in G$, then \[ |\ca H^v(G)| \le 2^{d-1}-d-1. \] Indeed a graph in $\ca H^v(G)$ can be constructed by partitioning the set $\partial(v)$ of all edges incident to $v$ into two disjoint subsets $A_1$ and $A_2$ such that both of them consist of at least two elements and letting all edges of $A_1$ be adjacent to one vertex of $K_2$ and all edges of $A_2$ be adjacent to the other one. This way we can notice that the number of graphs in $\ca H^v(G)$ can be at most half of the number of subsets $A_1$ of edges incident to $v$, such that $|A_1|\in \{ 2,3,\dots,d-2 \}$. Thus \[ |\ca H^v(G)| \le \frac{2^d-2d-2}{2} = 2^{d-1}-d-1. \]

It may happen that $\ca H^v(G)$ contains some graphs with a bridge, even when $G$ is bridgeless. We recall that the circular flow number of a graph with a bridge is set to be~$\infty$.

Now suppose that $G$ has a vertex of degree $4$ and let $\ca C^v(G)$ be the class of all graphs that can be obtained by expanding $v\in G$ in one of the two ways shown in Figure~\ref{fig:expansions} (in such a way that no vertex of degree $2$ is created).

\begin{figure}[htb!]
			\centering
			\includegraphics[width=12cm]{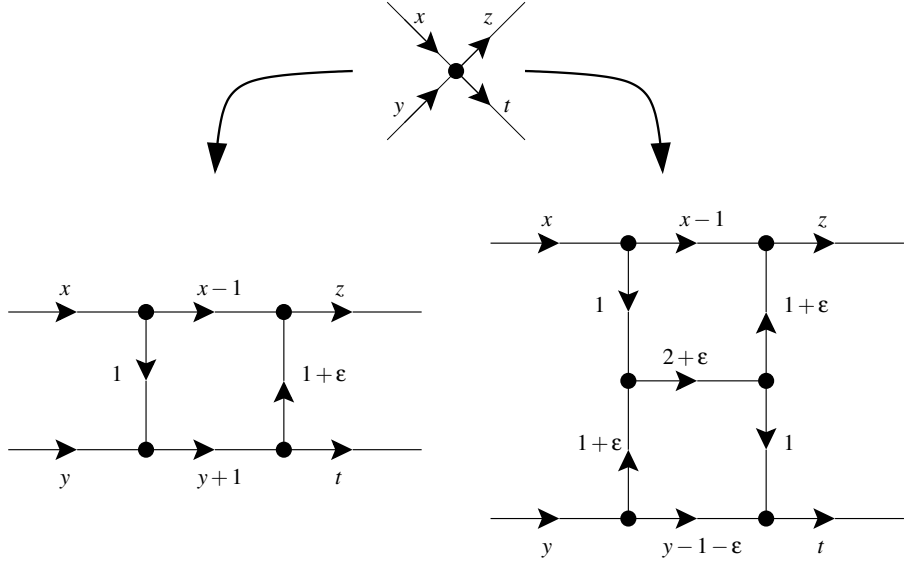}
	\caption{Expansions for a degree $4$ vertex.}\label{fig:expansions}
\end{figure}

\begin{definition}
	Let $G$ be a graph and $v\in V(G)$ with $d_G(v)=4$. Define \[ \ca G^v(G):= \ca C^v(G) \cup \ca H^v(G). \]
\end{definition}

The following two lemmas will be crucial for proving the main theorem of this section.

\begin{lemma}
	Let $G$ be a graph with a vertex $v$ of degree $4$. Then \[ \Phi_c(G)\ge 5 \iff \Phi_c(H)\ge5, \forall H \in \ca G^v(G). \]
\end{lemma}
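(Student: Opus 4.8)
The plan is to prove the two implications separately; the forward one is immediate from Remark~\ref{expansions}, and the reverse one carries all the content. For the forward implication, observe that every member of $\ca G^v(G)=\ca C^v(G)\cup\ca H^v(G)$ is by definition an expansion of $G$ (into a copy of $K_2$, or into one of the two gadgets of Figure~\ref{fig:expansions}). Hence Remark~\ref{expansions} yields $\Phi_c(H)\ge\Phi_c(G)$ for every $H\in\ca G^v(G)$, so $\Phi_c(G)\ge5$ forces $\Phi_c(H)\ge5$ for all such $H$.

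For the reverse implication I would argue by contraposition: assuming $\Phi_c(G)<5$, I would produce a single $H\in\ca G^v(G)$ with $\Phi_c(H)<5$. Fix a circular nowhere-zero $r$-flow $f$ of $G$ with $r<5$, together with its orientation, and let $\alpha_1,\alpha_2,\alpha_3,\alpha_4$ be the signed flow values on the four edges at $v$, taken positive when the edge is directed into $v$. Conservation at $v$ reads $\alpha_1+\alpha_2+\alpha_3+\alpha_4=0$, while $|\alpha_i|\in[1,r-1]\subseteq[1,4)$ for each $i$; the strict bound $|\alpha_i|<4$, i.e.\ $r<5$, is exactly what ties the statement to degree $4$. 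The basic move is to split $v$ into two vertices joined by a new edge, keep $f$ unchanged on all original edges, and put on the new edge the imbalance it is forced to carry: if a $2$-$2$ split sends the pair $\{i,j\}$ to one side, the new edge carries $|\alpha_i+\alpha_j|$, and this produces a circular nowhere-zero flow of value $<5$ on the corresponding member of $\ca H^v(G)$ whenever $1\le|\alpha_i+\alpha_j|<4$.

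I would then run a case analysis on the signs of the $\alpha_i$. If three of them share a sign, then the fourth has magnitude $\ge 3$ (forcing $r\ge4$), and pairing two of the three gives $|\alpha_i+\alpha_j|\in[2,r-2]\subseteq[1,4)$, so the associated $\ca H^v(G)$ graph inherits $f$. If the signs split $2$-$2$, say $\alpha_1,\alpha_2>0$ and $\alpha_3,\alpha_4<0$ with $\sigma:=\alpha_1+\alpha_2=|\alpha_3|+|\alpha_4|$, the three splits force joining values $\sigma$, $\bigl|\,\alpha_1-|\alpha_3|\,\bigr|$ and $\bigl|\,\alpha_1-|\alpha_4|\,\bigr|$; at least one of these lies in $[1,4)$ unless $\sigma\ge4$ while both cross-differences are $<1$. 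Thus in every configuration outside this last, degenerate regime some graph of $\ca H^v(G)$ already does the job.

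The degenerate regime, where $\sigma\ge4$ and $\bigl|\,\alpha_1-|\alpha_3|\,\bigr|,\bigl|\,\alpha_1-|\alpha_4|\,\bigr|<1$, is exactly where the two gadgets of $\ca C^v(G)$ in Figure~\ref{fig:expansions} are needed, and this is the main obstacle. Here the four values are all sizeable, and the single joining edge of any $\ca H^v(G)$ split is either overloaded (value $\sigma\ge4$) or starved (value $<1$). I would instead route $f$ through one of the two $\ca C^v(G)$ gadgets: since each contains a cycle through its new vertices, the values forced on its internal edges form a one-parameter family, and the task is to choose the parameter so that every internal value has magnitude in $[1,4)$, which gives $\Phi_c(H)<5$. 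The two gadgets of the figure correspond to the two ways of seating the two incoming and two outgoing edges, and the key point to verify is that at least one of them admits a valid choice of the parameter precisely in this degenerate regime. Confirming this last step, using $|\alpha_i|<4$ throughout, is the crux; the rest of the argument is routine once the obstruction to the simple $K_2$-splits has been isolated as above.
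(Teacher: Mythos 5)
Your overall strategy coincides with the paper's: the forward implication follows from Remark~\ref{expansions} since every member of $\ca G^v(G)$ is an expansion of $G$, and the reverse implication proceeds by extending a circular $r$-flow ($r<5$) of $G$ across a splitting of $v$, with the same case analysis — three edges of one orientation at $v$ (handled by a $K_2$-split carrying a value in $[2,3)$), the two-and-two case where some $K_2$-split already works, and the remaining ``degenerate'' regime. Your isolation of that regime ($\sigma\ge4$ together with both cross-differences smaller than $1$) is exactly the paper's reduction to $x+y\ge4$ and $z=x+\epsilon$ with $\epsilon\in[0,1)$.

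However, there is a genuine gap at precisely the point that carries the content of the lemma. In the degenerate regime you only state that one should route the flow through one of the two gadgets of $\ca C^v(G)$, that the internal values form a one-parameter family, and that ``the key point to verify is that at least one of them admits a valid choice of the parameter''; you then declare this verification to be the crux and do not carry it out. This is not a routine omission: the class $\ca C^v(G)$ is introduced solely to handle this case, so deferring the check leaves the essential step of the reverse implication unproved. The paper does settle it: with incoming values $x\ge y$ and outgoing values $z=x+\epsilon$, $t=y-\epsilon$, $\epsilon\in[0,1)$ (after possibly reversing the orientation), it observes that $x+y\ge4$ forces $x\ge2$, and then exhibits explicit flow extensions — the left gadget of Figure~\ref{fig:expansions} when $y<3$ and the right gadget when $y\ge3$ — all of whose internal values lie in $[1,4)$. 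To complete your argument you would need to specify the two gadgets, write down the forced internal flow values in terms of $x,y,\epsilon$ and your free parameter, and verify the interval constraints in each of these two subcases.
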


\begin{proof}
	The implication from the left to the right follows by noticing that every graph in $\ca G^v(G)$ is an expansion of $G$ and by Remark~\ref{expansions}.
	
	For the other direction we prove that if $G$ has a circular $r$-flow $\phi$ with $r<5$ then $\phi$ can be extended to a circular $s$-flow with $s<5$ in at least one of the graphs of the family $\ca G^v(G)$.
	
	Take such an $r$-flow and orient $G$ in such a way that all flow values are positive. Call $x,y,z$ and $t$ the flow values of the four edges incident to $v$ and let $e_x,e_y,e_z$ and $e_t$ the corresponding edges.
	
	Suppose first that there are three incoming flow values at $v$, say $x,y$ and $z$. Then $x+y+z=t \in [1,4)$, meaning that $x+y\in [2,3)$. Then if we expand $v$ into a copy of $K_2$ with vertices $v_1$ and $v_2$, in such a way that $e_x,e_y$ are adjacent to $v_1$ and $e_z,e_t$ to $v_2$, then we can orient the new edge from $v_1$ to $v_2$ and assign it the flow value $x+y=t-z$. Therefore the flow has been extended properly.
	
	Suppose, on the other hand, that there are two incoming flow values $x,y$ and two outgoing ones $z,t$ (without loss of generality let $x\ge y$).
	If either $x+y \in [2,4)$ or $|x-z|\in [1,4)$ then we can repeat the argument used above, i.e.\ in the first case let $e_x,e_y$ be adjacent to $v_1$ and $e_z,e_t$ to $v_2$, whereas, in the second case, let $e_x,e_z$ be adjacent to $v_1$ and $e_y,e_t$ to $v_2$. The $r$-flow in $G$ naturally extends to an $s$-flow in this new graph, where $s<5$.
	
	Hence we can suppose that $x+y\ge 4$ and $z=x+\epsilon$ for a suitable $\epsilon\in [0,1)$ without loss of generality. Indeed, if $z\le x$ then we can just switch the orientation of all edges of $G$ and keep unchanged their flow value. Note that from $z=x+\epsilon$ we get that $t=y-\epsilon$. Furthermore, since $x+y\ge4 $ at least one between $x$ and $y$ must be at least $2$. By the assumption $x\ge y$ it must be $x\ge2$. Therefore, if $y<3$ we extend the flow as shown in the left part of Figure~\ref{expansions}, while, if $y\ge3$, we extend the flow as shown in the right part of Figure~\ref{expansions}.
\end{proof}

\begin{lemma}
	Let $G$ be a graph with a vertex $v$ of degree at least $5$. Then \[ \Phi_c(G)\ge 5 \iff \Phi_c(H)\ge5, \forall H \in \ca H^v(G). \]
\end{lemma}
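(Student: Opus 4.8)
The plan is to prove the two implications separately: the forward one is immediate, while the backward one reduces, exactly as in the degree-$4$ lemma, to a purely combinatorial statement about signed sums. For the direction ``$\Phi_c(G)\ge5 \Rightarrow \Phi_c(H)\ge5$ for all $H\in\ca H^v(G)$'' I would simply note that each member of $\ca H^v(G)$ is an expansion of $G$ (the vertex $v$ is replaced by a copy of $K_2$), so Remark~\ref{expansions} gives $\Phi_c(H)\ge\Phi_c(G)\ge5$.

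For the converse I would argue by contraposition, mirroring the previous proof. Assume $G$ carries a circular $r$-flow $\phi$ with $r<5$; the goal is to exhibit one $H\in\ca H^v(G)$ admitting a circular $s$-flow with $s<5$. Taking the orientation given by $\phi$, all flow values are positive, and on the $d=\deg(v)\ge5$ edges at $v$ I record signed values $\sigma_e=+f_e$ if $e$ is directed into $v$ and $\sigma_e=-f_e$ otherwise, so that $\sum_e\sigma_e=0$ (conservation) and $|\sigma_e|\in[1,r-1]\subseteq[1,4)$. Splitting $v$ into $K_2=\{v_1,v_2\}$ by assigning a set $A$ of incident edges to $v_1$ and the rest to $v_2$ produces a graph in $\ca H^v(G)$ precisely when $2\le|A|\le d-2$ (so no new vertex has degree $1$ or $2$), and to extend $\phi$ the new edge $v_1v_2$ must carry the net value $\left|\sum_{e\in A}\sigma_e\right|$; this keeps the flow below $5$ exactly when $\left|\sum_{e\in A}\sigma_e\right|\in[1,4)$. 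Everything thus reduces to the claim: given reals $\sigma_1,\dots,\sigma_d$ with $d\ge5$, $\sum_i\sigma_i=0$ and $|\sigma_i|\in[1,4)$, there is $A$ with $2\le|A|\le d-2$ and $\left|\sum_{i\in A}\sigma_i\right|\in[1,4)$.

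To prove this I would first make two reductions. Replacing $A$ by its complement negates the sum and sends $|A|$ to $d-|A|$, so it suffices to find $A$ with sum in $[1,4)$. Writing the positive values as $a_1\le\cdots\le a_p$ and the magnitudes of the negative ones as $b_1\le\cdots\le b_q$ (common sum $T$), I would show $p,q\ge2$: if $q=1$ then the unique negative value equals $T=\sum_i a_i\ge p\ge d-1\ge4$, contradicting $|\sigma_i|<4$. Next I run through candidate sets of size $2$ and $3$. If $a_1+a_2<4$ use $\{a_1,a_2\}$; if $b_1+b_2<4$ use $\{b_1,b_2\}$; if $|a_i-b_j|\ge1$ for some mixed pair use $\{a_i,b_j\}$. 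Each of these is a valid size-$2$ set (complement of size $d-2\ge3$). Otherwise we are in the ``clustered'' regime $a_1+a_2\ge4$, $b_1+b_2\ge4$, and $|a_i-b_j|<1$ for all $i,j$, where no pair can be split off.

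The clustered regime is the step I expect to be the main obstacle, and I would resolve it with the two triples $\{a_1,a_2,b_q\}$ and $\{a_1,b_1,b_2\}$. Their lower bounds always hold: from $b_q<a_1+1$ and $a_2\ge2$ one gets $a_1+a_2-b_q>a_2-1\ge1$, and from $b_1+b_2\ge4$ together with the clustering one gets $b_1+b_2-a_1\ge1$; so each triple has absolute sum at least $1$. To rule out that both also exceed $4$, note that if $\{a_1,a_2,b_q\}$ fails then $b_q\le a_1+a_2-4$, while if $\{a_1,b_1,b_2\}$ fails then $b_1+b_2\ge a_1+4$, giving $b_q\ge b_2\ge\frac{a_1+4}{2}$; combining these yields $\frac{a_1+4}{2}\le a_1+a_2-4$, i.e. $a_1+2a_2\ge12$, which is impossible since $a_1,a_2<4$. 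Hence at least one triple has absolute sum in $[1,4)$, and its size $3\le d-2$ (using $d\ge5$) makes it an admissible $A$, completing the extension. I would close by remarking that this explains why degree $\ge5$ is handled by plain $K_2$-expansions while degree $4$ required the extra expansions of $\ca C^v(G)$: when $d=4$ these size-$3$ sets have complement of size $1$ and are no longer admissible.
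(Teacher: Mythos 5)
Your proof is correct, and while the overall skeleton matches the paper's (forward direction via the expansion remark; converse by contraposition, reducing to the purely combinatorial task of finding a set $A$ of edges at $v$ with $2\le|A|\le d-2$ whose signed flow sum has absolute value in $[1,4)$), the way you resolve that combinatorial core is genuinely different. The paper splits into two cases by degree: for $d=5$ it observes there must be three incoming and two outgoing values, handles the case where some incoming pair sums to less than $4$, and otherwise uses an averaging argument ($r\ge 6$, so some incoming value is at most $r/3$ and some outgoing value is at least $1+r/3$, yielding a good mixed pair); for $d\ge 6$ it compares the mean incoming and outgoing values $r_1,r_2$ and, in the hard subcase, derives a contradiction that ultimately requires the four-element set $\{e_{x_a},e_{x_b},e_{y_1},e_{y_2}\}$ -- which is admissible only because $d\ge 6$, and is precisely why the paper needs the degree split. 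Your argument is uniform over all $d\ge 5$: after checking that no size-$2$ set works exactly when $a_1+a_2\ge4$, $b_1+b_2\ge4$ and all mixed gaps are below $1$, you show that one of the two triples $\{a_1,a_2,b_q\}$, $\{a_1,b_1,b_2\}$ must succeed, with the clean final contradiction $a_1+2a_2\ge 12$ against $a_1,a_2<4$. Triples are admissible as soon as $d\ge5$, so your version avoids the case distinction entirely, and your closing remark correctly identifies why $d=4$ forces the extra expansions in $\mathcal{C}^v(G)$. The trade-off is essentially stylistic: the paper's mean-value argument generalizes mechanically to any degree but needs the larger witness set in the clustered situation, whereas your two-triple analysis is tighter and matches the hypothesis $d\ge5$ exactly. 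All the individual estimates in your write-up check out ($p,q\ge2$ from $d\ge5$; $a_2,b_2\ge2$ from the pair bounds; the lower bounds $a_1+a_2-b_q>a_2-1\ge1$ and $b_1+b_2-a_1>b_2-1\ge1$ from clustering; and the upper-bound dichotomy).
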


\begin{proof}
	The implication from the left to the right follows by noticing that every graph in $\ca H^v(G)$ is an expansion of $G$ and by Remark~\ref{expansions}.
	
	For the other direction we prove that if $\Phi_c(G) < 5$ then there is at least one graph in $\ca H^v(G)$ that has circular flow number less than $5$.
	
	So take a circular nowhere-zero $\Phi_c(G)$-flow $\phi$ in $G$ with an orientation such that all flow values are positive. Let $n_1:=|E^-(v)|>0$ the number of incoming edges at $v$ and let $x_1,\dots,x_{n_1} \in [1,4)$ be all incoming flow values at $v$. Respectively, let $n_2:=|E^+(v)|>0$ and $y_1,\dots,y_{n_2} \in [1,4)$ be all outgoing flow values at $v$. Without loss of generality we can suppose $n_1\ge n_2$, because the entire orientation can be reversed. Moreover let us denote by $e_{x_i}$ the incoming arc at $v$ whose flow value is $x_i$, and equivalently $e_{y_j}$ the outgoing arc at $v$ whose flow value is $y_j$.
	
	Let us first prove the statement in the case of degree exactly $5$.
	
	Notice that there cannot be $4$ incoming edges at $v$ since their sum should be at the same time greater then or equal to $4$ and equal to the outgoing flow value, which is less than $4$.
	
	Hence there are three incoming flow values, namely $x_1,x_2 $ and $x_3$, and two outgoing flow values, namely $y_1$ and $y_2$. Let \[ r:= x_1+x_2+x_3=y_1+y_2. \]
	
	Notice that, in order to complete the proof it is enough to exhibit a partition of $E(v) = E_1\cup E_2$ into two disjoint subsets $E_1$ and $E_2$ of edges such that both $|E_i|>1$ (condition required in order to guarantee that no vertices of degree $1$ or $2$ appear) and
	\begin{equation}
		|\sum_{e\in E_1\cap E^-(v)} \phi(e) - \sum_{e\in E_1\cap E^+(v)} \phi(e)| = |\sum_{e\in E_2\cap E^+(v)} \phi(e) - \sum_{e\in E_2\cap E^-(v)} \phi(e)| \in [1,4).
	\end{equation}
	Indeed, if $H\in\ca H^v(G) $ is obtained by expanding $v$ into the new edge $v_1v_2$ and letting all edges of $E_i$ be adjacent only to $v_i$, we can extend the flow $\phi$ in $G$ to a suitable $s$-flow in $H$ ($s<5$) by letting \[\phi(v_1v_2) := \sum_{e\in E_1\cap E^-(v)} \phi(e) - \sum_{e\in E_1\cap E^+(v)} \phi(e),\]
	where $v_1v_2$ is oriented from $v_1$ to $v_2$.
	
	If there are $x_i$ and $x_j$, $i\ne j$, such that $x_i+x_j< 4$ then we are done, by taking $E_1 = \{ e_{x_i},e_{x_j} \}$. Hence, we can suppose that \[ \begin{cases}
	x_1+x_2  \ge4 \\
	x_2+x_3 \ge4 \\
	x_1 +x_3 \ge 4
	\end{cases} \]
	and by summing all those three equations together we get \[ r\ge 6. \]
	
	There exists an incoming flow value, say $\hat{x}$, which is at most $\frac{r}{3}$, and there exists an outgoing flow value, say $\hat{y}$, which is at least $1+\frac{r}{3}$. Otherwise, if both $y_i$s are less than $ 1+\frac{r}{3}$, we get the following contradiction: \[ r = y_1+y_2 < 2\bigl(1 + \frac{r}{3} \bigr ) = 2 + \frac{2}{3}r \le \frac{1}{3}r + \frac{2}{3}r = r. \]
	Therefore $\hat{y}-\hat{x} \in [1,4)$ and if we take $E_1=\{ \hat{x},\hat{y} \}$ we obtain a graph in $\ca H^v(G)$ with circular flow number less than $5$.
	
	Let us now argue for the case $d_G(v)\ge 6$. If \[ \sum_{i\in I} x_i - \sum_{j\in J} y_j \in [1,4), \] for two suitable subsets $I\subseteq \{1,\dots, n_1\}, J \subseteq \{ 1,\dots,n_2 \}$, such that $1<|I|+|J|<n_1+n_2-2$, then the thesis follows by taking $E_1=\{ e_{x_i}\colon i\in I\} \cup \{e_{y_{j}}\colon  j\in J \}$. 
	
	Now let \[r:=\sum_{i=1}^{n_1} x_i = \sum_{j=1}^{n_2} y_j,\] be the sum of all incoming (or all outgoing) flow values and define \[
	\begin{cases}
	r_1 := \frac{r}{n_1} \quad \quad \text{ mean incoming flow value,} \\
	r_2 := \frac{r}{n_2}  \quad \quad \text{ mean outgoing flow value.}
	\end{cases}\]
	
	Suppose that $r_2-r_1 \ge 1$. Then there exists $x_{\alpha}$ and $y_{\beta}$ such that \[ x_{\alpha} +1 \le r_1 +1 \le r_2 \le y_{\beta}, \]
	and we are done again by setting $E_1 =\{e_{x_{\alpha}},e_{y_{\beta}}\}$.
	
	Suppose that $r_2-r_1<1$ and \[|\sum_{i\in I} x_i - \sum_{j\in J} y_j | \notin [1,4), \] for every $I\subseteq \{1,\dots, n_1\}, J \subseteq \{ 1,\dots,n_2 \}$, with $1<|I|+|J|<n_1+n_2-2$. We show that this set of hypothesis leads to a contradiction.
	
	There are $x_{a}$ and $x_{b}$ such that $x_a+x_b \le 2r_1 \le 2r_2$. Hence, if $x_a+x_b \in [5,2r_2]$ then take $\tilde{y}\in [r_2,4)$ and notice that \[x_a+x_b-\tilde{y}\in (1,r_2],\] which is not possible.
	
	Therefore, for every couple of different incoming flow values $x_a$ and $x_b$ such that $x_a+x_b \le 2r_1$, the sum $x_a+x_b<5$, meaning that $x_a+x_b \in [4,5)$. Then at least one of them, let us say $x_a$, is such that $x_a\in [1,2.5)$. As a consequence, since $|x_a-y_j|<1$, for every $j$, we get that \[ y_j\in [1,3.5),\forall j\in \{1,\dots, n_2\}. \]
	\begin{itemize}
		\item If there is a $\hat{y}\in [1,3]$ then $x_a+x_b-\hat{y}\in [1,4)$, a contradiction.
		\item If, on the other hand, every $y_j\in (3,3.5)$ then $y_1+y_2\in (6,7)$ and so $y_1+y_2-(x_a+x_b)\in (1,3)$, a contradiction again. \qedhere
	\end{itemize}
\end{proof}

The previous lemmas lead to the following theorem, which is the main result of this section.

\begin{theorem}\label{circflow5}
	Let $G$ be a bridgeless graph. There exists a finite family of bridgeless cubic graphs $\ca H$ such that \[ \Phi_c(G)\ge 5 \iff \Phi_c(H)\ge5 \quad \forall H\in \ca H. \]
\end{theorem}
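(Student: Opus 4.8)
The plan is to iterate the two preceding lemmas, peeling off one vertex of degree at least $4$ at a time, until only cubic graphs remain, and then to check that this process terminates, stays finite, and is compatible with the biconditional. First I would reduce to the case $\delta(G)\ge 3$. Since $G$ is bridgeless it has no vertex of degree $1$ (a pendant edge is a bridge), and any vertex $w$ of degree $2$ with neighbours $u,x$ can be suppressed — replacing the path $u-w-x$ by the edge $ux$ — which changes neither the other degrees nor $\Phi_c(G)$ and preserves bridgelessness. After suppressing all degree-$2$ vertices every surviving vertex keeps its old degree, so either $\delta(G)\ge 3$ or $G$ was $2$-regular; in the latter degenerate case $\Phi_c(G)=2<5$ and the claim is trivial (take $\ca H$ to be a single bridgeless cubic graph with $\Phi_c<5$). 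Hence I may assume $\delta(G)\ge 3$.

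Next I would measure how far $G$ is from being cubic by the nonnegative integer
\[ \mu(G):=\sum_{v\in V(G)}\bigl(\deg_G(v)-3\bigr)=2|E(G)|-3|V(G)|, \]
which vanishes precisely when $G$ is cubic, and induct on $\mu(G)$. The base case $\mu(G)=0$ is immediate: $G$ is already bridgeless cubic and I take $\ca H=\{G\}$.

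For the inductive step, if $\mu(G)>0$ there is a vertex $v$ with $\deg_G(v)\ge 4$. Applying the lemma for degree-$4$ vertices (producing the family $\ca G^v(G)=\ca C^v(G)\cup\ca H^v(G)$) or the lemma for vertices of degree at least $5$ (producing $\ca H^v(G)$) yields a finite family $\ca F$ with
\[ \Phi_c(G)\ge 5 \iff \Phi_c(H')\ge 5 \quad\text{for every }H'\in\ca F. \]
The key point is that every expansion occurring here replaces $v$ by a gadget all of whose vertices have degree at least $3$ while leaving all other degrees unchanged (this is built into the definitions of $\ca H^v$ and $\ca C^v$, which forbid creating vertices of degree $1$ or $2$; cf.\ Figure~\ref{fig:expansions}). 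A direct degree count then gives $\delta(H')\ge 3$ and $\mu(H')=\mu(G)-1<\mu(G)$ for each $H'\in\ca F$: in the $\ca H^v$ case the two new vertices have degrees $|A_1|+1,|A_2|+1$ contributing excess $(|A_1|-2)+(|A_2|-2)=\deg_G(v)-4$ in place of the old excess $\deg_G(v)-3$, and the degree-$4$ gadgets of $\ca C^v$ introduce only cubic vertices, removing the single unit of excess at $v$. By the induction hypothesis each $H'$ has a finite family $\ca H_{H'}$ of bridgeless cubic graphs realising the equivalence for $H'$; chaining the biconditionals, the finite union $\ca H:=\bigcup_{H'\in\ca F}\ca H_{H'}$ gives the desired equivalence for $G$.

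Two points remain, and the second is the main obstacle. First, intermediate graphs may acquire bridges even though $G$ is bridgeless; any such $H$ has $\Phi_c(H)=\infty\ge 5$, so the conjunct ``$\Phi_c(H)\ge 5$'' it contributes is vacuously true and can be deleted from $\ca H$ without altering the biconditional. Discarding all bridge-containing graphs leaves a family of \emph{bridgeless} cubic graphs as required, and this family is nonempty whenever $\Phi_c(G)<5$, since the reverse directions of the lemmas then manufacture genuine flows of value $<5$ along a surviving branch. The hard part will be verifying rigorously that $\mu$ is a legitimate strictly-decreasing termination measure across \emph{all} the expansions — in particular that each of the two degree-$4$ gadgets of Figure~\ref{fig:expansions} creates only degree-$3$ vertices and that no reduction ever produces a degree-$2$ vertex, which would break both the identity $\mu=\sum_v(\deg_G(v)-3)$ and the maintenance of $\delta\ge 3$. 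Granting this, well-foundedness of the induction and finiteness of $\ca H$ (a bounded-depth tree with finite branching) follow at once.
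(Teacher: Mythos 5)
Your proposal is correct and follows exactly the route the paper intends: the paper gives no explicit proof beyond the remark that ``the previous lemmas lead to the following theorem,'' and your argument is precisely that iteration, with the termination measure $\mu$, the initial suppression of degree-$2$ vertices, and the discarding of bridged cubic graphs supplying the details the paper leaves implicit. The point you flag as the remaining obstacle is indeed the only thing to check, and it is immediate from the definitions of $\ca H^v$ and $\ca C^v$ (which forbid creating vertices of degree $1$ or $2$, and whose degree-$4$ gadgets are internally cubic).
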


\section{Open Problems and New Conjectures}
\label{sec:open_problems}

We conclude our paper with some open problems and conjectures.

\begin{itemize}

\item We verified by computer that none of the cyclically $4$-edge-connected cubic graphs $G$ without a $3$-edge-coloring, having girth at least $4$ and on at most $32$ vertices such that $|V(G)|\equiv 0$ or $6 \mod 8$ has a circular flow number that attains the lower bound of Lukot'ka and \v Skoviera~\cite{Luk_cycle} from Theorem~\ref{thm:luksko} (cf.\ Table~\ref{table:cfn_values}). This fact implies that none of the non-$3$-edge-colorable cubic graphs of order at most $32$ such that $|V(G)|\equiv 0$ or $6 \mod 8$ has a circular flow number attaining the bound from Theorem~\ref{thm:luksko}. Indeed, assume that there exists such a graph $G$ having a $3$-edge-cut (and the property that $\Phi_c(G)$ attains the bound by Theorem \ref{thm:luksko}). Then, a non-$3$-edge-colorable smaller graph can be constructed by contracting one of the two sides of the $3$-edge-cut, say $H$, and $\Phi_c(H)\le \Phi_c(G)$ holds, because $G$ could be seen as an expansion of $H$. Hence, either we get a contradiction with Theorem~\ref{thm:luksko} if $H$ has much fewer vertices than $G$, or, iterating this process, we get a cyclically $4$-edge-connected cubic graph with no $3$-edge-coloring having circular flow number that attains the bound from Theorem~\ref{thm:luksko}, in contradiction with our computational results. Note that a similar argument applies to $2$-edge-cuts.

Computational evidence suggests the following strengthened version of Theorem~\ref{thm:luksko}:

\begin{conjecture}
	Let $G$ be a connected bridgeless cubic graph of order at most $8k+8$ that does not admit any $3$-edge-coloring. Then \[ \Phi_c(G) \ge 4+\frac{1}{k}. \]
\end{conjecture}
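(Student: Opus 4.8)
The plan is to argue by strong induction on the order $n=|V(G)|$, first isolating where the statement is genuinely new. For a fixed $k$, Theorem~\ref{thm:luksko} already gives $\Phi_c\ge 4+\frac1k$ for every non-$3$-edge-colorable graph of order at most $8k+4$, so it remains only to treat the two orders $n=8k+6$ and $n=8k+8$ (handling $8k+6$ before $8k+8$). I would dispose of reducible graphs as sketched in the open-problems discussion: if such a $G$ has a $3$-edge-cut, contracting one side to a single vertex yields a connected bridgeless cubic $H$ with $|V(H)|<n$; since the two sides of a $3$-edge-cut are $3$-edge-colorable exactly when $G$ is, I may keep $H$ non-$3$-edge-colorable, and because $G$ is an expansion of $H$, Remark~\ref{expansions} gives $\Phi_c(G)\ge\Phi_c(H)$. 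As $|V(H)|\le 8k+6$, the induction hypothesis (or Theorem~\ref{thm:luksko} when $|V(H)|\le 8k+4$) yields $\Phi_c(H)\ge 4+\frac1k$; the suppression at a $2$-edge-cut behaves the same way. This leaves exactly the cyclically $4$-edge-connected (hence girth $\ge 4$) graphs of order $8k+6$ or $8k+8$ as the irreducible core.

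For the core I would pass to the bisection formula~(\ref{ratiovscfn}). Assume $\Phi_c(G)<4+\frac1k$; this is in particular less than $5$, so~(\ref{ratiovscfn}) applies and yields an optimal $2$-bisection $V(G)=\ca B\cup\ca W$ whose minimal ratio exceeds $\frac{4k+1}{2k+1}$. Hence every $X\subseteq V(G)$ with $\Delta(X)>0$ satisfies
\[
|\partial(X)| \;>\; \Bigl(2-\tfrac{1}{2k+1}\Bigr)\Delta(X).
\]
Writing $g(X):=2\Delta(X)-|\partial(X)|$ and using the parity identity $|\partial(X)|\equiv 3|X|\equiv|X|\equiv\Delta(X)\pmod 2$ (immediate from cubicity), the displayed inequality forces $g(X)\le 0$ for every $X$ with $\Delta(X)\le 2k$, and $g(X)\le -1$ whenever moreover $\Delta(X)$ is odd. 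Thus the hypothesis $\Phi_c(G)<4+\frac1k$ says precisely that the bisection is uniformly near-optimal: no set of small $\Delta$ produces a positive gain. By Remark~\ref{summary_properties} I may further assume the sets realizing the minimum are connected, of order $\le n/2$, with $\partial_V(X)$ and $\partial_V(\bar X)$ monochromatic of opposite colours, which pins down the local structure of $\ca B$ and $\ca W$ around each cut.

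The aim is to convert this uniform near-optimality into the bound $n\ge 8k+10$, contradicting $n\le 8k+8$ for the two remaining orders. Concretely I would run a discharging argument over the monochromatic components of $G[\ca B]$ and $G[\ca W]$ (isolated vertices and monochromatic edges), charging to each the deficit it contributes to $g$, and summing these along a family of nested connected subsets grown one component at a time, so that $\Delta$ and $|\partial|$ move in controlled unit steps. The families $S_k$ and the Flower snarks $J_{2k+1}$, which attain $4+\frac1k$ at orders $8k+2$ and $8k+4$, must appear as the boundary cases the count has to respect: it must not prove a strict inequality there, yet it must gain the extra vertices at $8k+6$ and $8k+8$. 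In other words, the argument has to show that a cyclically $4$-edge-connected graph cannot realise a value below $4+\frac1k$ at these two orders without exhibiting a cyclic edge-cut of size at most $3$ — that the four vertices separating $8k+4$ from $8k+8$ are exactly the price exacted by high cyclic edge-connectivity.

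The main obstacle, and the reason the statement is still only a conjecture, is this last step. The value $8k+4$ in Theorem~\ref{thm:luksko} appears to be the natural ceiling of a purely local (per-vertex, per-component) count, and extracting the additional constant $4$ seems to require a genuinely global use of cyclic $4$-edge-connectivity to exclude the efficient defect patterns available at lower connectivity; it is not clear how to localise that saving. Because the bound is attained at $8k+2$ and $8k+4$, any valid argument must be sharp enough not to overshoot there while still gaining strictly at the two larger orders — a delicate, tightness-respecting accounting that, rather than the reduction or the bisection reformulation, I expect to be the crux.
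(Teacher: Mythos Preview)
The statement you are attempting to prove is presented in the paper as a \emph{conjecture}, not a theorem; the paper offers no proof, only computational evidence up to order $32$ together with exactly the reduction you describe (from arbitrary non-$3$-edge-colorable cubic graphs to cyclically $4$-edge-connected ones of girth at least $4$ via $2$- and $3$-edge-cuts and Remark~\ref{expansions}). So there is no ``paper's own proof'' to compare against, and your proposal cannot be a correct proof because the statement is, as far as anyone knows, open.

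You are honest about this: your final paragraph explicitly identifies the core step---showing that a cyclically $4$-edge-connected non-$3$-edge-colorable cubic graph of order $8k+6$ or $8k+8$ cannot have $\Phi_c<4+\tfrac1k$---as ``the reason the statement is still only a conjecture'', and you do not claim to carry it out. Everything before that point is either standard (the edge-cut reduction, which the paper itself sketches in the same section) or a reformulation (passing to the bisection identity~(\ref{ratiovscfn}) and the parity observation on $g(X)=2\Delta(X)-|\partial(X)|$, both of which are correct but do not by themselves constrain $|V(G)|$). The ``discharging argument over monochromatic components'' you allude to is only a wish: you name no discharging rules, no invariant, and no mechanism by which cyclic $4$-edge-connectivity buys the extra four vertices. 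In particular, you give no indication of how the argument would distinguish the tight orders $8k+2$ and $8k+4$ (where equality is attained by $S_k$ and $J_{2k+1}$) from $8k+6$ and $8k+8$ without overshooting at the former. That is precisely the content of the conjecture, and your proposal does not touch it. In short: the reduction portion is fine and matches the paper's own discussion, but what you have written is a research plan rather than a proof, and the missing step is the whole problem.
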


\item By using the algorithm presented in this paper, we verified that the circular flow number of the Goldberg snarks $G_3$, $G_5$ and $G_7$ meet the upper bound given by Proposition~\ref{prop:goldberg}. This seems to suggest that the following conjecture could be true:

\begin{conjecture}
	Let $G_{2k+1}$ be the Goldberg's snark on $8(2k+1)$ vertices. Then \[\Phi_c(G_{2k+1})=4+\frac{1}{k+1}\] for every positive integer $k$.
\end{conjecture}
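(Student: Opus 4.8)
The plan is to prove the theorem by induction, using the two preceding lemmas as the single reduction step that strips away one ``unit'' of non-cubicity at a time. I measure the distance of a graph $G$ of minimum degree at least $3$ from being cubic by the \emph{excess degree}
\[ \mu(G) := 2|E(G)| - 3|V(G)| = \sum_{v\in V(G)} (\deg_G(v) - 3), \]
which is a non-negative integer that vanishes precisely when $G$ is cubic. Before starting the induction I reduce to the case of minimum degree at least $3$: a bridgeless graph has no vertex of degree $1$, and every vertex of degree $2$ can be suppressed (replacing its two incident edges by a single edge) without changing the circular flow number; the only components destroyed by this process are cycles, which have circular flow number $2<5$ and hence never influence whether $\Phi_c(G)\ge 5$.

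For the inductive step, suppose $\mu(G)>0$, so that $G$ has a vertex $v$ with $\deg_G(v)=d\ge 4$. If $d=4$ I apply the first of the two preceding lemmas to replace $G$ by the finite family $\ca G^v(G)$, and if $d\ge 5$ I apply the second lemma to replace $G$ by $\ca H^v(G)$; in either case the lemma yields
\[ \Phi_c(G)\ge 5 \iff \Phi_c(H)\ge 5 \text{ for all } H \text{ in the family.} \]
Each member $H$ of the family is obtained by expanding $v$ into a gadget all of whose vertices have degree at least $3$ and strictly less than $d$: for the $K_2$-expansions in $\ca H^v(G)$ one splits the $d$ edges at $v$ into two blocks of sizes $|A_1|,|A_2|\ge 2$, producing two vertices of degrees $|A_1|+1$ and $|A_2|+1$, both in the range $[3,d-1]$, while the gadgets of $\ca C^v(G)$ introduce only degree-$3$ vertices. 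A one-line degree count then gives $\mu(H)=\mu(G)-1$ for every member $H$. Since each family is finite and $\mu$ is a non-negative integer that drops by one at each step, iterating the reduction produces a finite rooted tree of graphs whose leaves are exactly the graphs with $\mu=0$, that is, the cubic graphs; composing the biconditionals along the tree shows that $\Phi_c(G)\ge 5$ if and only if $\Phi_c(H)\ge 5$ for every cubic leaf $H$. Collecting these leaves into $\ca H$ gives a finite family of cubic graphs with the desired equivalence.

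It remains to guarantee that $\ca H$ can be taken to consist of \emph{bridgeless} graphs, and this is the one genuinely delicate point. As the excerpt already notes, an expansion may create a bridge even when $G$ is bridgeless, so the families $\ca G^v(G)$ and $\ca H^v(G)$ need not be bridgeless. The key observation is that a graph with a bridge has circular flow number $\infty\ge 5$, so such a member contributes a conjunct $\Phi_c(H)\ge 5$ that is automatically true; deleting it from the family changes neither side of the biconditional, because removing always-true conjuncts from a conjunction preserves its truth value, and because the backward direction of each lemma is witnessed by a graph carrying a circular $r$-flow with $r<5$, which is necessarily bridgeless and hence never pruned. I therefore discard every bridged graph as soon as it appears and recurse only on the bridgeless members, so that every leaf placed into $\ca H$ is a bridgeless cubic graph.

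The main obstacle is exactly this interplay between keeping $\ca H$ bridgeless and preserving the equivalence: one must check that pruning bridged graphs is harmless in both directions and that no bridgeless cubic certificate is lost in the process, which is what the observation above secures. The remaining ingredients, namely the monotone measure $\mu$, the finiteness of each family, and the termination of the recursion, are then routine bookkeeping layered on top of the two preceding lemmas.
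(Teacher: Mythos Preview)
Your proposal does not address the statement at all. The statement you were given is the \emph{conjecture} that $\Phi_c(G_{2k+1})=4+\tfrac{1}{k+1}$ for the Goldberg snarks. The paper does \emph{not} prove this: it only establishes the upper bound $\Phi_c(G_{2k+1})\le 4+\tfrac{1}{k+1}$ (Proposition~\ref{prop:goldberg}) and verifies equality by computer for $k=1,2,3$; the general equality is explicitly left open as a conjecture. So there is no ``paper's own proof'' to compare against, and any purported proof would have to supply the missing lower bound $\Phi_c(G_{2k+1})\ge 4+\tfrac{1}{k+1}$, improving on the Lukot'ka--\v Skoviera bound $4+\tfrac{1}{2k+1}$.

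What you have actually written is a proof of Theorem~\ref{circflow5}, the reduction of the question ``is $\Phi_c(G)\ge 5$?'' for an arbitrary bridgeless graph to a finite family of bridgeless cubic graphs via the two lemmas on degree-$4$ and degree-$\ge 5$ vertices. That argument is essentially the one the paper intends (induction on excess degree, pruning bridged expansions because they satisfy $\Phi_c=\infty\ge 5$ automatically), but it has nothing to do with the Goldberg snarks, which are already cubic and have circular flow number strictly below $5$. None of your machinery---the excess-degree measure $\mu$, the expansions in $\ca G^v(G)$ and $\ca H^v(G)$, the pruning of bridged graphs---touches the quantity $\Phi_c(G_{2k+1})$ or produces any lower bound for it. In short, the proposal proves the wrong theorem; as an attempt on the stated conjecture it is a complete non sequitur.
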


\item In Section~\ref{Sec:certificatecircularflownumber5} we show that the problem of deciding if a graph has circular flow number $5$ or not can be reduced to cubic graphs. More in general, we showed that it can be reduced to checking a class of graphs with vertices of smaller degree. In doing this we tried to focus on the smallest possible expansions, but we do not pay special attention to the number of constructed graphs. Proving a version of Theorem~\ref{circflow5} with some additional restrictions on the cardinality of the family $\cal{H}$ could be an interesting problem.

\end{itemize}

\section*{Acknowledgements}
Most computations for this work were carried out using the Stevin Supercomputer Infrastructure at Ghent University.

\end{document}